\newtheorem{theorem}{Theorem}[section]
\newtheorem{lemma}[theorem]{Lemma}
\newtheorem{cor}[theorem]{Corollary}
\def \T{{\mathbb T}}
\def \H{{\mathbb H}}
\def \N{{\mathbb N}}
\def \R{{\mathbb R}}
\def \E{{\mathbb E}}
\def \Z{{\mathbb Z}}
\def \S{{\mathbb S}}
\def \G{{\mathcal G}}
\def \R{{\mathcal R}}
\def \C{{\mathcal C}}
\def \[{[ }
\def \]{] }
\def \t{\tilde}
\def \wt{\widetilde}
\def \vol{\text{\,vol\,}}
\def \dim{\text{\,dim\,}}
\def  \Star{\text{\,Star\,}}
\title[Coxeter groups, mutations and manifolds]%
{Coxeter groups, quiver mutations and geometric manifolds}
\author{Anna Felikson and Pavel Tumarkin}
\begin{document}
\maketitle

\begin{abstract} 
We construct finite volume hyperbolic manifolds with large symmetry groups. The construction makes use of the presentations of finite Coxeter groups provided by Barot and Marsh and involves mutations of quivers and diagrams defined in the theory of cluster algebras. We generalize our construction by assigning to every quiver or diagram of finite or affine type a CW-complex with a proper action of a finite (or affine) Coxeter group. These CW-complexes undergo mutations agreeing with mutations of quivers and diagrams. We also generalize the construction to quivers and diagrams originating from unpunctured surfaces and orbifolds.
\end{abstract}

\tableofcontents

\section{Introduction}
In~\cite{BM} Barot and Marsh provided presentations of finite Weyl groups in terms of arbitrary quivers or diagrams of finite type. In brief, the construction works as follows: one needs to consider the underlying unoriented labeled graph of a quiver (or diagram) as a Coxeter diagram of a (usually infinite) Coxeter group, and then impose some additional relations on this group that can be read off from the quiver/diagram. It is proved in~\cite{BM} that the resulting group depends on the mutation class of a quiver/diagram only, see Section~\ref{group} for the details. 

The goal of this paper is to use these presentations to construct hyperbolic manifolds, in particular ones of small volume. The most common way to construct hyperbolic manifolds is by considering finite index torsion-free subgroups of cofinite hyperbolic reflection groups (see e.g.~\cite{CM,E,ERTs,RTs}). The approach used in the present paper is completely different: we start with a {\it symmetry group} of a manifold, more precisely, we are looking for a manifold whose symmetry group contains a given finite Weyl group. In~\cite{EM} Everitt and Maclachlan describe an algebraic framework for this approach, and in~\cite{KSl} Kolpakov and Slavich explicitly construct an arithmetic hyperbolic $4$-manifold with given finite isometry group. We use quite different techniques based on {\em mutations of quivers and diagrams} defined by Fomin and Zelevinsky in the context of cluster algebras. This technique allows us to obtain manifolds having large symmetry groups and, at the same time, relatively small volume, i.e. two properties of quite opposite nature.

Mutation classes of quivers and diagrams of finite type are indexed by finite Weyl groups. Our interpretation of the result of~\cite{BM} is that, starting from a quiver or diagram $\G$ of finite type, one can present the corresponding finite Weyl group $W$ as a quotient of some (usually infinite) Coxeter group $W_0(\G)$ by the normal closure $W_C(\G)$ of additional relations. If this Coxeter group $W_0(\G)$ is hyperbolic, then the Weyl group $W$ acts properly on the quotient of the hyperbolic space by the group $W_C(\G)$. {\it A~priori}, this quotient may have singular points. However, this is not the case.

\setcounter{section}{6}
\setcounter{theorem}{1}

\begin{theorem}[(Manifold Property)]
The group $W_C(\G)$ is torsion-free.
\end{theorem}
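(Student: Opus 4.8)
The plan is to deduce torsion-freeness of $W_C(\G)$ from a statement about the \emph{finite} parabolic subgroups of the Coxeter group $W_0(\G)$, and then to read that statement off from the Barot--Marsh presentation recalled above. Recall the standard fact that every finite subgroup of a Coxeter group lies in a conjugate of a finite (``spherical'') standard parabolic subgroup $W_T$ --- a finite cyclic subgroup acts on the complete CAT(0) Davis complex of $W_0(\G)$, hence fixes a point, and point stabilizers there are conjugates of finite standard parabolics; in particular this applies to the cyclic group generated by any torsion element of $W_0(\G)$. Since $W_C(\G)$ is normal in $W_0(\G)$, for any $w\in W_0(\G)$ we have $W_C(\G)\cap wW_Tw^{-1}=w\bigl(W_C(\G)\cap W_T\bigr)w^{-1}$, and $W_C(\G)\cap W_T$ is a subgroup of the \emph{finite} group $W_T$, hence is trivial as soon as it is torsion-free. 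Therefore $W_C(\G)$ is torsion-free if and only if $W_C(\G)\cap W_T=\{1\}$ for every spherical subset $T$ of vertices; equivalently, using $W_0(\G)/W_C(\G)\cong W$ from~\cite{BM}, the quotient homomorphism $\pi\colon W_0(\G)\to W$ is injective on each spherical standard parabolic $W_T$.

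It remains to prove this injectivity. If $T$ is spherical then the Coxeter diagram of $W_T$, which is the full subdiagram of $\G$ on $T$, is a disjoint union of Dynkin diagrams; in particular it contains no cycle, so the full subquiver (or subdiagram) $\G_T$ is acyclic --- here one uses that a full subquiver of a finite type quiver is again of finite type, and that an acyclic quiver or diagram of finite type is an orientation of a Dynkin diagram. Hence $\G_T$ has no chordless cycles, the Barot--Marsh construction for $\G_T$ imposes no additional relations, $W_C(\G_T)=\{1\}$, and $W_T$ coincides with the Coxeter group $W_0(\G_T)$, which is exactly the finite Weyl group $W(\G_T)$ of that type. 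The elements $\pi(s_i)$ with $i\in T$ are reflections in $W$ satisfying the Coxeter relations of $W(\G_T)$ (these relations already hold in $W_0(\G)$, hence in $W$), so $\langle\pi(s_i):i\in T\rangle$ is a quotient of $W(\G_T)$; conversely, under the isomorphism $W_0(\G)/W_C(\G)\cong W$ of~\cite{BM} the generators $s_i$ are carried to reflections in a root basis of $W$, and for $i\in T$ the corresponding roots span a sub-root-system of type $\G_T$, whose reflection subgroup has order $|W(\G_T)|$. Consequently $W_T=W(\G_T)\twoheadrightarrow\langle\pi(s_i):i\in T\rangle$ is an isomorphism, so $\ker(\pi|_{W_T})=W_C(\G)\cap W_T=\{1\}$, and the first paragraph then gives the theorem.

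The formal part --- the reduction to spherical parabolics, and the observation that a spherical $T$ yields an acyclic full subdiagram $\G_T$ with $W_C(\G_T)=\{1\}$ --- is routine. I expect the real obstacle to be the final order computation, that is, bounding the parabolic subgroups $\langle\pi(s_i):i\in T\rangle$ of $W$ from below; equivalently, showing that the Barot--Marsh construction is compatible with restriction to the acyclic full subdiagram, so that $W_C(\G)\cap W_T=W_C(\G_T)$. This is exactly the step that must exploit the explicit relationship between the generators of $W_0(\G)$ and the root system of $W$ (or, alternatively, an inductive analysis of how the chordless-cycle relations interact with parabolic subgroups); once this compatibility is in hand, the theorem follows immediately.
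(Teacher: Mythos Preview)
Your reduction to spherical standard parabolics via the CAT(0) fixed-point property of the Davis complex is exactly the paper's strategy (stated there as the equivalent Theorem~\ref{manifold-eq} about stars of faces), and your observation that a spherical $T$ forces the full subdiagram $\G_T$ to be acyclic, hence free of cycle relations, is correct. You also correctly isolate the crux and, in your final paragraph, honestly flag it as unfinished: one must show that $\langle\pi(s_i):i\in T\rangle\subset W$ already has order $|W_T|$.

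The paper fills precisely this gap, but by a geometric argument rather than the root-system one you sketch. First (Lemma~\ref{eq-dihedral}) the order of $s_is_j$ is the same in $W_0(\G)$ and in $W$: the only way it could drop is to $\pi(s_i)=\pi(s_j)$, and then $W$ would be generated by fewer than $n$ reflections, contradicting its rank. Thus both $W_T$ and its image are generated by reflections in the facets of spherical $(|T|{-}1)$-simplices, the first with dihedral angles $\pi/m_{ij}$ and the second with angles $\pi p_{ij}/m_{ij}$ for some $p_{ij}$ coprime to $m_{ij}$; since each $m_{ij}\in\{2,3\}$, every angle is $\pi/2$, $\pi/3$ or $2\pi/3$, and the paper invokes~\cite{F} (spherical simplices generating discrete reflection groups) to conclude that these two simplices generate isomorphic groups. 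That is the missing ingredient in your outline.

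Your root-system route can be made to work in the simply-laced case, but it needs an explicit step you do not supply. The images $\pi(s_i)$ correspond to roots of a companion basis, hence are linearly independent; any two roots in a simply-laced system meet at inner product $0$ or $\pm 1$; and because the underlying graph of $\G_T$ is a forest one can flip signs along each tree so that all pairwise inner products become $\le 0$, exhibiting the chosen roots as simple roots of a subsystem of type $\G_T$. Without this sign-flipping argument (or the paper's appeal to~\cite{F}), the sentence ``the corresponding roots span a sub-root-system of type $\G_T$'' in your second paragraph is precisely the assertion to be proved, so as written the argument is circular at that point.
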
     

As a corollary, if the Coxeter group $W_0(\G)$ is hyperbolic and cofinite, we obtain a hyperbolic manifold of finite volume, and the symmetry group of this manifold contains $W$. We list the manifolds that we found in this way in Tables~\ref{hyp} and~\ref{hyp-n}. In dimension $4$ we construct a manifold with Euler characteristic $2$, i.e. it has the second minimal volume amongst all $4$-dimensional manifolds. The symmetry group of this manifold contains the group $\text{Sym}_5\rtimes \Z_2$ as a subgroup. We note that, since this manifold has small volume and large symmetry group at the same time, it does not appear in~\cite{KSl}.

In~\cite{FT} we generalized the results of~\cite{BM} to quivers and diagrams of affine type and certain other mutation-finite quivers and diagrams. In particular, every affine Weyl group also admits presentations of a similar type with slightly more complicated groups $W_C(\G)$. Using this result, in Section~\ref{aff} we provide the following theorem.  

\setcounter{section}{8}
\setcounter{theorem}{0}

\begin{theorem}
The manifold property holds for quivers and diagrams of affine type.
\end{theorem}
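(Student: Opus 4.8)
The plan is to mimic the proof in the finite-type case (Theorem~6.2): reduce the manifold property to an injectivity statement about finite parabolic subgroups, and then settle that statement using the reflection representation attached to the presentation of~\cite{FT}.

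First I would carry out the reduction, which is insensitive to whether $\G$ is of finite or affine type. Since $W_C(\G)$ is normal in $W_0(\G)$, any nontrivial torsion element of $W_C(\G)$ is a nontrivial torsion element of $W_0(\G)$; by the standard fact that every finite subgroup of a Coxeter group is conjugate into a finite standard parabolic subgroup, and because conjugation in $W_0(\G)$ preserves $W_C(\G)$, such an element may be assumed to lie in some finite standard parabolic $W_{S'}$ of $W_0(\G)$. Hence $W_C(\G)$ is torsion-free if and only if the quotient homomorphism $\pi\colon W_0(\G)\to W$ restricts to an injection on every finite standard parabolic $W_{S'}$ --- precisely the reformulation already used in the finite-type case.

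Next I would analyse the finite parabolics. The subgroup $W_{S'}$ is finite exactly when the underlying labelled subgraph $\G_{S'}$ is a Coxeter diagram of finite type; every finite-type Coxeter diagram is a forest, so $\G_{S'}$ has no cycles and is an acyclic quiver (diagram). Because all of the extra relations appearing in the presentations of~\cite{BM} and of~\cite{FT} are attached to cyclic subconfigurations, none of them is present in $\G_{S'}$: thus $W_C(\G_{S'})$ is trivial, $W_{S'}=W_0(\G_{S'})$ coincides with the Weyl group $W(\G_{S'})$ of a finite root system, and it remains to show that the homomorphism $W(\G_{S'})\to W$ induced by $s_i\mapsto s_i$ is injective for every such $S'$. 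For this I would use the geometric realization carried along by the construction in~\cite{FT}, in which $W$ acts on a real vector space preserving a symmetric bilinear form with each $s_i$ acting as the reflection in a vector $\alpha_i$; the point is that for $S'$ of finite type the $\alpha_i$, $i\in S'$, are linearly independent and their Gram matrix is the Cartan matrix of $\G_{S'}$. Given this, the reflections $s_i$, $i\in S'$, generate a finite reflection group of the expected type with the $s_i$ as simple reflections, which forces $W(\G_{S'})\to W$ to be an isomorphism onto its image, as required.

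I expect the main obstacle to be exactly this last point in the affine setting. In the finite-type case the ambient form is positive definite, so every restriction is automatically positive definite and the sub-root-system statement is essentially immediate; for affine $\G$ the ambient form is only positive semidefinite, carrying a one-dimensional radical, so one must verify that passing to a finite-type subconfiguration kills the radical and that the $\alpha_i$, $i\in S'$, span a positive-definite subspace realizing the root system of $\G_{S'}$. A secondary point, to be handled using the explicit list of relations in~\cite{FT}, is to confirm that each of the (slightly more complicated) affine relations really is supported on a cyclic subconfiguration, hence vacuous on the forest $\G_{S'}$. Alternatively, one could organise the whole argument as an induction along mutations, starting from an acyclic representative of the mutation class --- where $W_C(\G)=1$ and the statement is vacuous --- and checking that the controlled surgery on the associated CW-complex induced by a single mutation preserves the property of being a manifold.
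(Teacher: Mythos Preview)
Your proposal is essentially correct and runs parallel to the paper's argument, differing mainly in packaging. The paper's proof is terse: it says to apply the considerations of Section~6.2 together with Remark~\ref{lemma-n}. Unpacked, this means: (i) extend Lemma~\ref{eq-dihedral} to the affine setting --- the order of $t_it_j$ in $W$ equals that of $\tilde t_i\tilde t_j$ in $W_0(\G)$ --- by the same induction on the number of mutations from an acyclic representative as in Remark~\ref{lemma-n}; (ii) for each finite parabolic $\tilde T_I\subset W_0(\G)$, the image $T_I\subset W$ is a finite group of Euclidean isometries, hence fixes a point, and is therefore generated by reflections in the facets of a spherical simplex with dihedral angles $\pi p_{ij}/m_{ij}$, $\gcd(p_{ij},m_{ij})=1$, $m_{ij}\in\{2,3,4,6\}$; (iii) invoke~\cite{F} to conclude $|T_I|=|\tilde T_I|$.

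Your reduction to injectivity on finite parabolics is exactly the reformulation in Theorem~\ref{manifold-eq}. Your key claim --- that the $\alpha_i$, $i\in S'$, have Gram matrix equal (up to signs, harmless on a forest) to the Cartan matrix of $\G_{S'}$ --- is precisely the root-system translation of Lemma~\ref{eq-dihedral}, and you are right that this is the crux. Once it is granted, your positive-definiteness/linear-independence argument is a clean substitute for the paper's appeal to~\cite{F}. What you mention only at the end as an ``alternative'' --- organising things as an induction along mutations from an acyclic representative --- is in fact the paper's actual mechanism for securing this rank-two input (Remark~\ref{lemma-n}), not a side option. In the simply-laced affine case your root argument nearly gives it for free, since inner products of distinct real roots lie in $\{0,\pm1,\pm2\}$ and the values $\pm2$ force either equality of the reflections or an infinite dihedral group; but for diagrams carrying labels $2$ or $3$ the induction of Remark~\ref{lemma-n} is genuinely needed.

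One minor point: your ``secondary'' worry about whether the affine relations of~\cite{FT} are supported on cyclic subconfigurations is immaterial to the argument as you have structured it. You only use that $W_{S'}$ is itself a finite Coxeter group, and this is immediate from (R1)--(R2) once you know $\G_{S'}$ is a finite-type forest; you never need to match $W_C(\G)\cap W_{S'}$ with $W_C(\G_{S'})$.
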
     
\setcounter{section}{1}
\setcounter{theorem}{0}

For quivers and diagrams originating from unpunctured surfaces or orbifolds~\cite{FST,FeSTu3}, as well as for exceptional mutation-finite quivers and diagrams~\cite{FeSTu1,FeSTu2}, the construction in~\cite{FT} also provides a group assigned to a mutation class. In contrast to the finite and affine cases, this group is not a Coxeter group, however, it also has presentations as a quotient of a Coxeter group $W_0$ (with more types of generators for $W_C$ required). 

Using these presentations, for every quiver or diagram $\G$ of all types discussed above, we construct a CW-complex (actually being a quotient of the Davis complex of a Coxeter group $W_0(\G)$) with a proper action of the group $W(\G)=W_0(\G)/W_C(\G)$, where the group $W(\G)$ depends on the mutation class of $\G$ only. We also define mutations of these complexes, the mutations agree with quiver (or diagram) mutations. All the manifolds discussed above are partial cases of these complexes.

\bigskip


The paper is organized as follows. Section~\ref{background} contains basic definitions and some essential facts on Coxeter groups and quiver mutations. In Section~\ref{group} we recall the construction from~\cite{BM} assigning to a quiver of finite type a presentation of the corresponding Weyl group. For simplicity, in Sections~\ref{group}--\ref{Davis} we restrict ourselves to simply-laced Weyl groups (and thus, we work with quivers rather than diagrams), we then consider the general case in Section~\ref{sec diagr}. In Section~\ref{various actions} we describe the construction highlighted above of various actions of a given finite Weyl group. In section~\ref{examples} we first treat in full details our construction applied to the group $A_3$; then we list other geometric manifolds with Weyl group actions resulted from the algorithm described in Section~\ref{various actions}. In particular, Table~\ref{hyp} contains the list of hyperbolic manifolds we obtain. In Section~\ref{Davis}, after defining the Davis complex of a Coxeter group, we prove the Manifold Property. We also define mutations of quotients of Davis complexes agreeing with quiver mutations. Section~\ref{sec diagr} is devoted to generalizations of the results to all finite Weyl groups. Finally, in Section~\ref{inf} we describe how the construction generalizes to affine Weyl groups and some other infinite groups related to quivers and diagrams of finite mutation type.     

\section{Quiver mutations and presentations of Coxeter groups}
\label{background}

\subsection{Coxeter groups}
We briefly remind some basic properties of Coxeter groups. For the details see~\cite{D}.

\subsubsection{Definitions}
A group $W$ is called a (finitely generated) {\it Coxeter group} if it has a presentation of the form
$$ 
W=\langle  s_1\dots s_n \ | \ s_i^2=(s_is_j)^{m_{ij}}=e  \rangle
$$ 
where $m_{ii}=1$ and $m_{ij}\in \N_{>1}\cup \infty$ for all $i\ne j$.
Here $m_{ij}=\infty$ means that there is no relation on $s_i s_j$.
A pair $(W, S)$ of a Coxeter group $W$ and its set of generators $S=\{s_1,\dots,s_n \}$ is called a {\it Coxeter system}. The cardinality $n$ of $S$ is a {\it rank} of the Coxeter system.

An element of $W$ is said to be a {\it reflection} if it is conjugated in $W$ to an element of $S$.



A Coxeter system $(W,S)$  may be depicted by its {\it Coxeter diagram}: 
\begin{itemize}
\item the vertices of the Coxeter diagram correspond to the generators $s_i$;
\item $i$-th vertex is connected to $j$-th by an edge labeled $m_{ij}$ with the following exceptions:
\begin{itemize}
\item if $m_{ij}=2$ then there is no edge connecting $i$ to $j$;
\item if $m_{ij}=3$ then the label 3 is usually omitted;
\item if $m_{ij}=\infty$ the edge is drawn bold.
\end{itemize}

\end{itemize}

Given a Coxeter system $(W,S)$, one can write a symmetric $n\times n$ matrix $M(W,S)=\{M_{ij}\}$ as follows:
$$
M_{ij}=
\begin{cases}
1 & \text{if $i=j$;} \\
-\cos\frac{\pi}{m_{ij}} & \text{if $i\ne j$ and $m_{ij}\ne \infty$;} \\
-1 & \text{if $m_{ij}=\infty$.} \\
\end{cases}
$$

An $n\times n$ matrix $a_{ij}$ is {\it decomposable} if 
there is a non-trivial partition of the index set as $\{1,\dots,n\}=I\cup J$, so that
$a_{ij}=a_{ji}=0$ whenever $i\in I$, $j\in J$. 
A matrix is  {\it indecomposable} if it is not decomposable.

\subsubsection{Actions by reflections}
\label{action}
For some Coxeter systems $(W,S)$ the group $W$ has a natural discrete action by reflections on a space of constant curvature
(here by an {\it action by reflections} we mean an action where the reflections of $W$ are represented by orthogonal reflections with respect to hyperplanes). 

More precisely, there are three cases of interest:
\begin{itemize}
\item  $M(W,S)$ is positive definite: then $W$ is a finite group and $W$ acts on a sphere $S^{n-1}$;
\item  $M(W,S)$ is indecomposable and has signature $(n-1,0,1)$: then $W$ acts cocompactly on Euclidean space $\E^{n-1}$;
\item  $M(W,S)$ is indecomposable and has signature $(n-1,1)$: then $W$ acts on an $(n-1)$-dimensional hyperbolic space $\H^{n-1}$.

\end{itemize}

See~\cite{V85} for the details.

A fundamental domain of such an action is a connected component of the complement to the set of all hyperplanes fixed pointwise by reflections of the group. This fundamental domain is a polytope (which may be of infinite volume) with facets indexed by generators $\{s_i\}_{i=1,\dots,n}$ and all dihedral angles equal to $\pi/m_{ij}$ (if $m_{ij}=\infty$ the corresponding facets do not intersect). Such polytopes are called {\it Coxeter polytopes}.

\begin{remark}
\label{root}
If a Coxeter group $W$ is a finite or affine Weyl group, then the outer normal vectors to the facets of the fundamental polytope can be identified with the simple roots of the corresponding root system.

\end{remark}

In the three cases listed above the group $W$ is called {\it geometric} (note that our class of geometric Coxeter groups is wider than in~\cite[Chapter 6]{D}). 
In fact, every Coxeter group acts properly cocompactly by reflections on some specially constructed space~\cite{D1}, we will discuss this action in Section~\ref{Davis}.

\subsection{Quiver mutations}

Let $Q$ be a quiver (i.e. a finite oriented multi-graph) containing no loops and no 2-cycles. If there are $q$ arrows pointing from $i$-th vertex to the $j$-th, then we draw one arrow with a weight $w_{ij}=q$.


%


\subsubsection{Definition of a quiver mutation}
For every vertex $k$ of the quiver $Q$ one can define an involutive operation  $\mu_k$ called {\it mutation of $Q$ in direction $k$}. This operation produces a new quiver  denoted by $\mu_k(Q)$ which can be obtained from $Q$ in the following way (see~\cite{FZ1}): 
\begin{itemize}
\item
orientations of all arrows incident to the vertex $k$ are reversed; 
\item
for every pair of vertices $(i,j)$ such that $Q$ contains arrows directed from $i$ to $k$ and from $k$ to $j$ the weight of the arrow joining $i$ and $j$ changes as described in Figure~\ref{quivermut}.
\end{itemize} 

\begin{figure}[!h]
\begin{center}
\psfrag{a}{\small $a$}
\psfrag{b}{\small $b$}
\psfrag{c}{\small $c$}
\psfrag{d}{\small $d$}
\psfrag{k}{\small $k$}
\psfrag{mu}{\small $\mu_k$}
\epsfig{file=./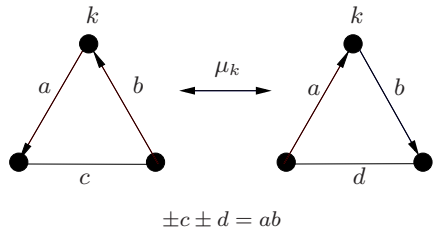,width=0.35\linewidth}\\
\medskip
$\pm{c}\pm{d}={ab}$
\caption{Mutations of quivers. The sign before ${c}$ (resp., ${d}$) is positive if the three vertices form an oriented cycle, and negative otherwise. Either $c$ or $d$ may vanish. If $ab$ is equal to zero then neither the value of $c$ nor orientation of the corresponding arrow changes.}
\label{quivermut}

\end{center}
\end{figure}

Given a quiver $Q$, its {\it mutation class} is a set of all quivers obtained from $Q$ by all sequences of iterated mutations. All quivers from one mutation class are called {\it mutation-equivalent}.

Quivers without loops and 2-cycles are in one-to-one correspondence with integer skew-symmetric matrices $B=\{b_{ij}\}$, where $b_{ij}$ is the number of arrows from $i$-th vertex to $j$-th one. In terms of the matrix $B$ the mutation $\mu_k$ can be written as $\mu_k(B)=B'$, where
$$b'_{ij}=\left\{
           \begin{array}{ll}
             -b_{ij}, & \hbox{ if } i=k \hbox{ or } j=k; \\
             b_{ij}+\frac{|b_{ik}|b_{kj}+b_{ik}|b_{kj}|}{2}, & \hbox{ otherwise.}\\
           \end{array}
         \right.
$$
This transformation is called a {\it matrix mutation}.

\begin{remark}
The procedure of matrix mutation is well-defined for more general case of {\it skew-symmetrizable} integer matrices. Matrix mutations for non-skew-symmetric matrices correspond to {\it diagram mutations} (see Section~\ref{sec diagr} for the definitions and generalizations of the results to the case of diagram mutations).

\end{remark}

\subsubsection{Finite type}
A quiver is of {\it finite type} if it is mutation-equivalent to an orientation of a simply-laced Dynkin diagram.
All orientations of a given Dynkin diagram are mutation-equivalent, so any quiver of finite type is of one of the following mutation types: $A_n$, $D_n$, $E_6$, $E_7$, $E_8$.



\subsubsection{Finite mutation  type}

A quiver $Q$ is of  {\it finite mutation type} (or {\it mutation-finite}) if there are finitely many quivers mutation-equivalent to $Q$.
It is shown in~\cite{FeSTu1}  that each quiver of finite mutation type is either of order $2$, or a quiver arising from a triangulated surface (see~\cite{FST} for details), or mutation-equivalent to one of the 11 exceptional quivers.

In this paper we deal with quivers of finite mutation type only.

\section{Presentations of Coxeter groups arising from quiver mutations}
\label{group}
In~\cite{BM} Barot and Marsh show that for each quiver of finite type there is a way to explicitly construct the corresponding Weyl group.

\subsection{Construction of the group by a quiver}
Let $Q$ be an orientation of a simply-laced  Dynkin diagram with $n$ nodes, let $W$ be the corresponding finite  Coxeter group, and $Q_1$ be any quiver mutation-equivalent to $Q$. Denote by $W(Q_1)$ the group generated by $n$ generators $s_i$ with the following relations:

\begin{itemize}
\item[(R1)] $s_i^2=e$ for all $i=1,\dots,n$;

\item[(R2)] $(s_is_j)^{m_{ij}}=e$ for all $i,j$, 
where
$$
m_{ij}=
\begin{cases}
2 & \text{if $i$ and $j$ are not connected;} \\
3 & \text{if $i$ and $j$ are connected by an arrow.} \\
\end{cases}
$$

\item[(R3)] {\bf (cycle relations)} for every chordless oriented cycle $\C$ given by 
$$i_0\stackrel{}\to i_1\stackrel{}\to\cdots\stackrel{}\to i_{d-1}\stackrel{}\to i_0$$
we take the relation
$$
(s_{i_0}\ s_{i_{1}}\dots s_{i_{d-2}}s_{i_{d-1}}s_{i_{d-2}}\dots s_{i_{1}})^{2}=e,
$$

\end{itemize}
Note that for a cycle of length $d$ we have a choice of $d$ relations of type (R3), we take any one of them.

It is shown in~\cite[Theorem A]{BM} that the group $W(Q_1)$  does not depend on the choice of a quiver in the mutation class of $Q$. In particular, 
it is isomorphic to the initial Coxeter group $W$. 

\begin{remark}
The results of~\cite{BM} are proved in more general settings, we come to the general case in Section~\ref{sec diagr}. 

\end{remark}

\subsection{Generators of $W(Q_1)$}
\label{pres}
Generators of $W(Q_1)$ satisfying relations (R1)--(R3) can be expressed in terms of standard generators of $W$. It is shown in~\cite{BM} that these generators of $W(Q_1)$ can be found inductively in the following way:

\begin{itemize}
\item[1.]
for $Q_1=Q$ the generators $\{s_i\}$ are standard Coxeter generators (indeed, Dynkin diagrams contain no cycles, so, we will get no cycle relations and obtain standard presentation of the Coxeter group).

\item[2.] Let $\{s_i\}$ be the generators of $W(Q_1)$, and let $Q_2=\mu_k(Q_1)$.
Then the generators $\{t_i\}$ of $W(Q_2)$ satisfying relations (R1)--(R3) are
$$
t_{i}=
\begin{cases}
s_k s_i s_k & \text{if there is an arrow from $i$ to $k$ in $Q_1$;} \\
s_i & \text{otherwise.} \\
\end{cases}
$$

\end{itemize}



\section{Construction of various actions of $W$ }
\label{various actions}
As before, let $Q$ be an orientation of a Dynkin diagram, and let $W$ be the corresponding Weyl group. Choose any quiver $Q_1$ mutation-equivalent to $Q$, and denote by $W_0(Q_1)$ the group defined by relations of type (R1) and (R2) only (note that $W_0(Q_1)$ is a Coxeter group). 

Let $C_1,\dots,C_p$ be a collection of all cycle relations for $Q_1$ (we take one relation for each chordless oriented cycle).
Denote by $W_C(Q_1)$ the normal closure of the union of the elements $C_1,\dots,C_p$ in $W_0(Q_1)$.
By the definition of $W(Q_1)$ we have $W(Q_1)=W_0(Q_1)/W_C(Q_1)$. Theorem~A from~\cite{BM} says that $W(Q_1)$ is isomorphic to $W=W(Q)$ which, in particular, means that $W_C(Q_1)$ has finite index in $W_0(Q_1)$. 

The group  $W_0(Q_1)$ depends on the quiver $Q_1$ chosen in the mutation class of $Q$. Moreover, depending on the choice of $Q_1$, the group  $W_0(Q_1)$ may be finite or infinite. As a consequence, it may  act naturally by reflections on different spaces (sphere, Euclidean space, hyperbolic space or, in general, the Davis complex of the group $W_0(Q_1)$, see Section~\ref{Davis}). Since $W= W(Q_1)=W_0(Q_1)/W_C(Q_1)$, this results in actions of the initial Coxeter group $W$ on different spaces. 

In Section~\ref{Davis} we show that the group $W_C(Q_1)$ is torsion-free (we call this {\it Manifold Property}). In particular, this implies that if the Coxeter group $W_0(Q_1)$ acts properly on a space ${\mathbb X}=\E^n$ or $\H^n$ by isometries with a fundamental domain of finite volume, then the quotient ${\mathbb X}/W_C(Q_1)$ is a finite volume manifold with a symmetry group containing $W$. In the next section we show examples of the construction of hyperbolic manifolds in this way, as well as of actions of Weyl groups of type $D$ on flat tori.

\section{Examples of manifolds constructed via quiver mutations}
\label{examples} 
\subsection{Action of $A_3$ on the flat $2$-torus}
\label{A3}
We now present the first (and the easiest) example showing how different actions of the same Coxeter group may by produced via mutations of quivers. 

\subsubsection{Action on the sphere}
\label{sph}
We start with the quiver $Q$  of type $A_3$ (see Fig.~\ref{stereo}(a),(b)). The corresponding Dynkin diagram of type $A_3$ determines a finite Coxeter group $W$ acting on the $2$-dimensional sphere by reflections:
$$W=\langle s_1,s_2,s_3 \,|\, s_i^2=(s_1s_2)^3=(s_2s_3)^3=(s_1s_3)^2=e \rangle $$ 
The fundamental domain of this action is a spherical triangle with angles $(\frac{\pi}{3},\frac{\pi}{3},\frac{\pi}{2})$, see Fig.~\ref{stereo}(c) for the stereographic projection image of the action and the fundamental domain.

\begin{figure}[!h]
\begin{center}
\psfrag{a}{\small (a)}
\psfrag{b}{\small (b)}
\psfrag{c}{\small (c)}
\psfrag{1}{\scriptsize $1$}
\psfrag{2}{\scriptsize $2$}
\psfrag{3}{\scriptsize $3$}
\epsfig{file=./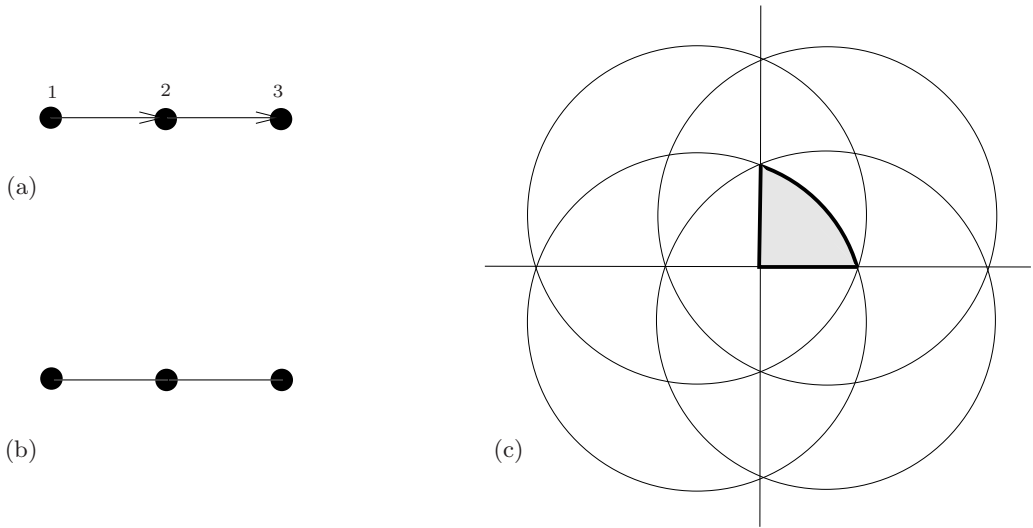,width=0.85\linewidth}
\caption{(a) A quiver of type $A_3$; (b) Dynkin diagram of type $A_3$; (c) action of $A_3$ on the sphere (in stereographic projection): a sphere  is tiled by 24 copies of (spherical) triangles with angles 
$(\frac{\pi}{2},\frac{\pi}{3},\frac{\pi}{3})$.
}
\label{stereo}
\end{center}
\end{figure}

\subsubsection{Action on the flat $2$-torus}
\label{torus}
Applying the mutation $\mu_2$ to the quiver $Q$, we obtain a quiver $Q_1$ (see Fig.~\ref{plane}(a)). The group $$W_0(Q_1)=\langle t_1,t_2,t_3  \ | \ t_i^2=(t_it_j)^3=e\rangle$$  
is the group generated by reflections in the sides of a regular triangle in the Euclidean plane (the corresponding Dynkin diagram is of type $\wt A_2$), see Fig.~\ref{plane}(b). Note that the plane is tessellated by infinitely many copies of the fundamental domain, each copy $F_g$ labeled by an element $g\in W_0(Q_1)$. To see the action of the initial group $A_3=W= W(Q_1)$ we need to take a quotient by the cycle relation
$$
(t_1\ t_2t_3t_2)^2=e,
$$
which means that in the tessellated plane we identify $F_e$ with the fundamental triangle labeled by the $(t_1\ t_2t_3t_2)^2$.

\begin{figure}[!h]
\begin{center}
\psfrag{a}{\small (a)}
\psfrag{b}{\small (b)}
\psfrag{c}{\small (c)}
\psfrag{Q}{\small $Q$}
\psfrag{Q1}{\small $Q_1$}
\psfrag{m2}{\scriptsize $\mu_2$}
\psfrag{1}{\scriptsize $1$}
\psfrag{2}{\scriptsize $2$}
\psfrag{3}{\scriptsize $3$}
\psfrag{l1}{\scriptsize $l_1$}
\psfrag{l2}{\scriptsize $l_2$}
\psfrag{l3}{\scriptsize $l_3$}
\psfrag{l232}{\scriptsize $l_{232}$}
\psfrag{Fe}{\scriptsize $F_e$}
\psfrag{F1232}{\scriptsize $F_{(t_1\ t_2t_3t_2)}$}
\epsfig{file=./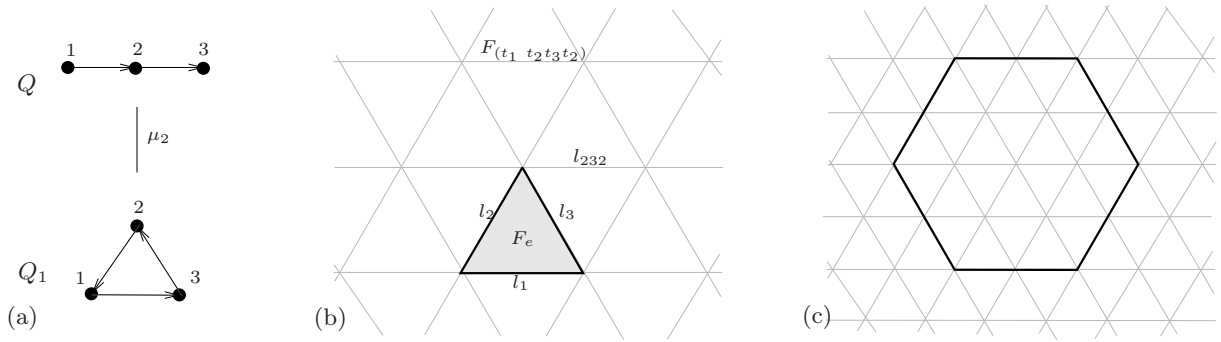,width=1.00\linewidth}
\caption{(a) mutation $\mu_2$; (b) action of $W_0(Q_1)=\widetilde A_2$ on the plane; (c) action of $W=W(Q_1)=A_3$ on the torus: $24$ regular triangles tile the hexagon whose opposite sides are identified.  }
\label{plane}
\end{center}
\end{figure}

Now, let us see what the transformation $T_1=(t_1\ t_2t_3t_2)^2$ means geometrically. 
The element $t_1$ acts on the plane as the reflection in the side $l_1$ of $F_e$,
$t_2t_3t_2$ is the reflection with respect to the line $l_{232}$ parallel to $l_1$ and passing through the vertex of $F_e$ not contained in $l_1$. Denote by $d$ the distance between $l_1$ and $l_{232}$. 
Then the transformation $(t_1\ t_2t_3t_2)$ is a translation of the plane by the distance $2d$ in the direction orthogonal to $l_1$. Hence, the element $T_1=(t_1\ t_2t_3t_2)^2$ is a translation  by $4d$ in the direction orthogonal to $l_1$.

Clearly, $e=t_2(t_1\ t_2t_3t_2)^2t_2=(t_2t_1t_2 \ t_3)^2$ in $W$, and 
$e=t_3 (t_1\ t_2t_3t_2)^2t_3= t_3(t_1\ t_3t_2t_3)^2t_3= (t_3t_1t_3\ t_2)^2$ in $W$.
This implies that we also need to take a quotient of the plane by the actions of the elements 
$T_3=(t_2t_1t_2 \ t_3)^2$ and  $T_2=(t_3t_1t_3\ t_2)^2$, i.e. by the translations by $4d$ in directions orthogonal to the lines $l_2$ and $l_3$.


The quotient of the plane by the group $W_C(Q_1)$ generated by three translations $T_1,T_2$ and $T_3$  is a flat torus, see Fig.~\ref{plane}(c). 
Thus, we obtain an action of the quotient group $A_3=W= W(Q_1)$ on the torus: the fundamental domain of this action is a regular triangle, $24$ copies of this triangle tile the torus (as $24$ copies tile the hexagon).



\begin{remark}
\label{non-reflection}
The action described above is not an action by (topological) reflections: the fixed sets of the generators $t_i$ are connected, so they do not divide the torus into two connected components (cf.~\cite[Chapter 10]{D}).

\end{remark}

\subsubsection{From the sphere to the torus:  ramified covering}
\label{sph to tor}
Now, we will show how the action of $A_3$ on the torus is related to the initial action on the sphere.

Recall from Section~\ref{pres} that the presentation for $W(Q_1)$ is obtained from the presentation for $W(Q)$ by the following change of generators
$$ (s_1,s_2,s_3)\to (t_1,t_2,t_3) \text{ \quad where \quad } t_1 =s_2s_1s_2, \ t_2=s_2, \ t_3=s_3.$$
In the above sections we considered the action of the $s$-generators on the sphere and of the $t$-generators on the plane and the torus.
However, the $t$-generators may be also considered directly on the sphere.

Namely, let $s_1, s_2,s_3$ be the reflections with respect to the corresponding sides $l_1,l_2,l_3$ of the fundamental spherical triangle $\Delta_s$, see Fig.~\ref{cover}(a).
Then the elements $t_2$ and $t_3$ are reflections with respect to $l_2$ and $l_3$ respectively.
The element $t_1$ is also a reflection as it is conjugate to the reflection $s_1$; more precisely, $t_1$ is a reflection with respect to the (spherical) line $l_{212}$ (see Fig.~\ref{cover}(b)).

\begin{figure}[!h]
\begin{center}
\psfrag{Dt}{\scriptsize $\Delta_t$}
\psfrag{Ds}{\scriptsize $\Delta_s$}
\psfrag{l1}{\scriptsize $l_1$}
\psfrag{l2}{\scriptsize $l_2$}
\psfrag{l3}{\scriptsize $l_3$}
\psfrag{l212}{\scriptsize $l_{212}$}
\psfrag{a}{\small (a)}
\psfrag{b}{\small (b)}
\epsfig{file=./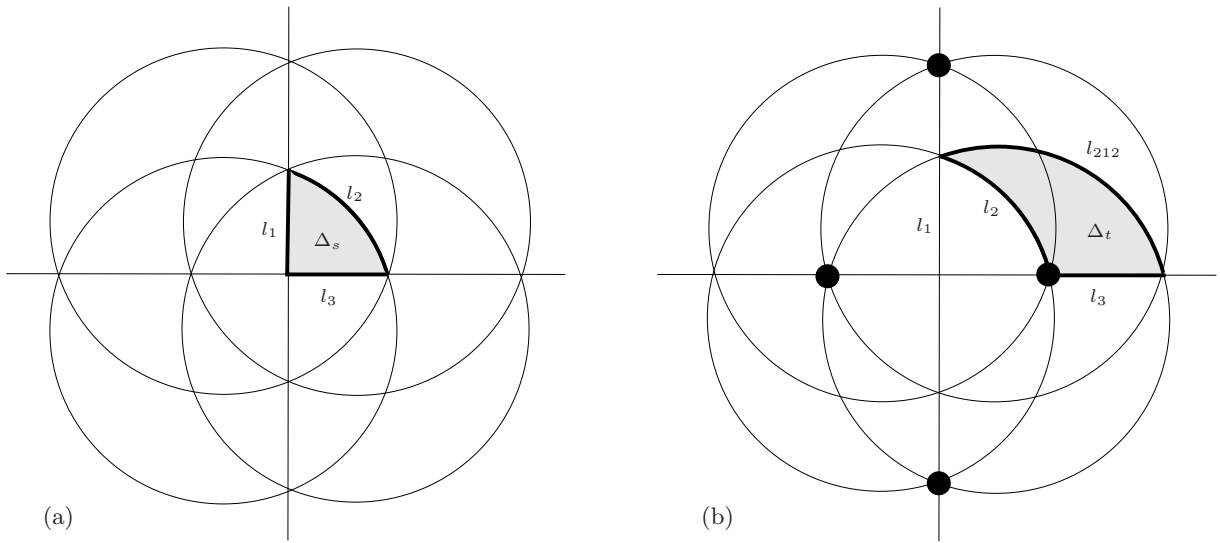,width=1.00\linewidth}
\caption{Construction of the ramified covering}
\label{cover}
\end{center}
\end{figure}

Consider the triangle  $\Delta_t$ bounded by $l_{212}, l_{2}$, and $l_{3}$ (there are 8 possibilities to choose such a triangle,
see Remark~\ref{rem choice} for the justification of our choice).

In contrast to the triangle $\Delta_s$ bounded by $l_1,l_2,l_3$, the triangle $\Delta_t$ is not a fundamental triangle for the action: $\Delta_t$ has angles $(\frac{\pi}{3},\frac{\pi}{3},\frac{2\pi}{3})$, and the group generated by $t_2$ and $t_3$ has order 6, but the whole neighborhood of the vertex $p\in l_2\cap l_3$ is tiled by $3$ copies of $\Delta_t$ only. 

To resolve this inconsistency we consider a $2$-sheet covering of the sphere branched in $p$ and in all $W$-images of $p$ (the four bold points in
Fig.~\ref{cover}(b)). Clearly, the covering space is a torus (moreover, one can easily see that the covering is given by Weierstrass $\wp$-function). It is also not difficult to see that the action of $A_3$ on this covering torus is exactly the same as its action on the torus in Section~\ref{torus}: the covering torus consists of $24$ triangles, and every vertex is incident to exactly six triangles.  



\begin{remark}[On the choice of the triangle $\Delta_t$]
\label{rem choice}
Consider the outer normals $v_1,v_2,v_3$ to the sides of $\Delta_s$  (cf. Remark~\ref{root}). 
To get the outer normals $u_1,u_2,u_3$  to the sides of $\Delta_t$ we use the following rule:
$$
u_i=
\begin{cases}
-v_i & \text{if $i=k$;} \\
s_k(v_i) & \text{if there is an arrow from $i$ to $k$ in $Q$;} \\
v_i & \text{otherwise.} \\ 
\end{cases}
$$
Here $s_k(v_i)$ is the vector obtained from $v_i$ by the reflection $s_k$.

\end{remark}

\subsubsection{Back from the torus to the sphere}
\label{back-s}
Now we will obtain the action of $A_3$ on the sphere from its action on the torus.

Let $\Delta_t$ be the fundamental triangle for the action of $A_3$ on the flat torus (see Fig.~\ref{back}(a)). Let $\pi$ be the canonical projection from Euclidean plane to the torus (corresponding to the action of $W_C(Q_1)$ on the plane), denote by $\wt\Delta_t$ any preimage of $\Delta_t$ under $\pi$.
The generators $t_1,t_2,t_3$ can be thought as reflections with respect to the sides of  $\wt\Delta_t$.
The generators $s_i$ can be obtained from the generators $t_i$ as follows:
$$
s_1=t_2t_1t_2, \ s_2=t_2, \ s_3=t_3. 
$$
These are reflections with respect to the sides of the dashed domain $\wt\Delta_s$ (see Fig.~\ref{back}(b)),
where $\wt\Delta_s$ is an unbounded strip on the Euclidean plane.

\begin{figure}[!h]
\begin{center}
\psfrag{Dt}{\scriptsize $\wt\Delta_t$}
\psfrag{Ds}{\scriptsize $\wt\Delta_s$}
\psfrag{l1}{\scriptsize $l_1$}
\psfrag{l2}{\scriptsize $l_2$}
\psfrag{l3}{\scriptsize $l_3$}
\psfrag{l212}{\scriptsize $l_{212}$}
\psfrag{a}{\small (a)}
\psfrag{b}{\small (b)}
\epsfig{file=./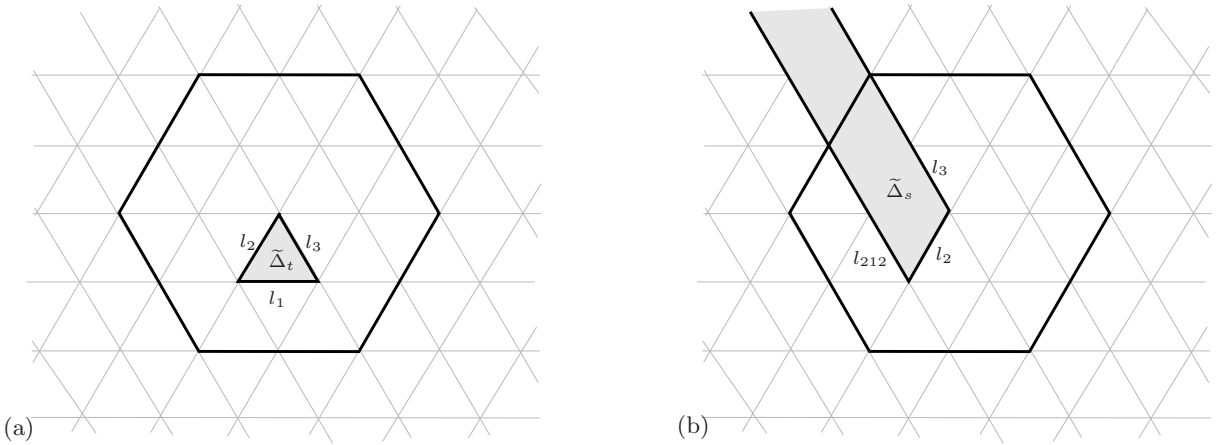,width=1.00\linewidth}
\caption{Back from the flat torus to the sphere}
\label{back}
\end{center}
\end{figure}

We now glue a manifold from $24$ copies of the strip (according to the group action, as we glued $24$ triangles above), denote by $X$ the space obtained via this gluing. Note that due to the cycle relations
four strips attached one to another along the ``long'' (unbounded) sides will result in an (unbounded) cylinder. The manifold $X$ consists of exactly $6=24:4$ cylinders.
Compactifying $X$ by adding a limit point at the end of each of the $6$ cylinders, we get exactly the tiling shown in Fig.~\ref{stereo}(c). 

\subsection{Actions of $D_n$ on the flat $(n-1)$-torus }
The next series of examples is provided by the finite Coxeter group $W=D_n$, $n> 3$ 
(see Fig.~\ref{dn}(a) for the Dynkin diagram), which naturally acts on $(n-1)$-dimensional sphere (for $n=3$ the Dynkin diagram and Weyl group $D_3$ coincides with $A_3$, so the example from the previous section can be considered as a partial case).
It is easy to check that the quiver $Q$ of type $D_n$ (see  Fig.~\ref{dn}(b)) is mutation-equivalent to the oriented cycle $Q_1$ shown in Fig.~\ref{dn}(c)
(in contrast to $A_3$-example, one needs more than one mutation for this).

\begin{figure}[!h]
\begin{center}
\psfrag{1}{\scriptsize $1$}
\psfrag{2}{\scriptsize $2$}
\psfrag{3}{\scriptsize $3$}
\psfrag{4}{\scriptsize $4$}
\psfrag{n1}{\scriptsize $n-1$}
\psfrag{n}{\scriptsize $n$}
\psfrag{a}{\small (a)}
\psfrag{b}{\small (b)}
\psfrag{c}{\small (c)}
\epsfig{file=./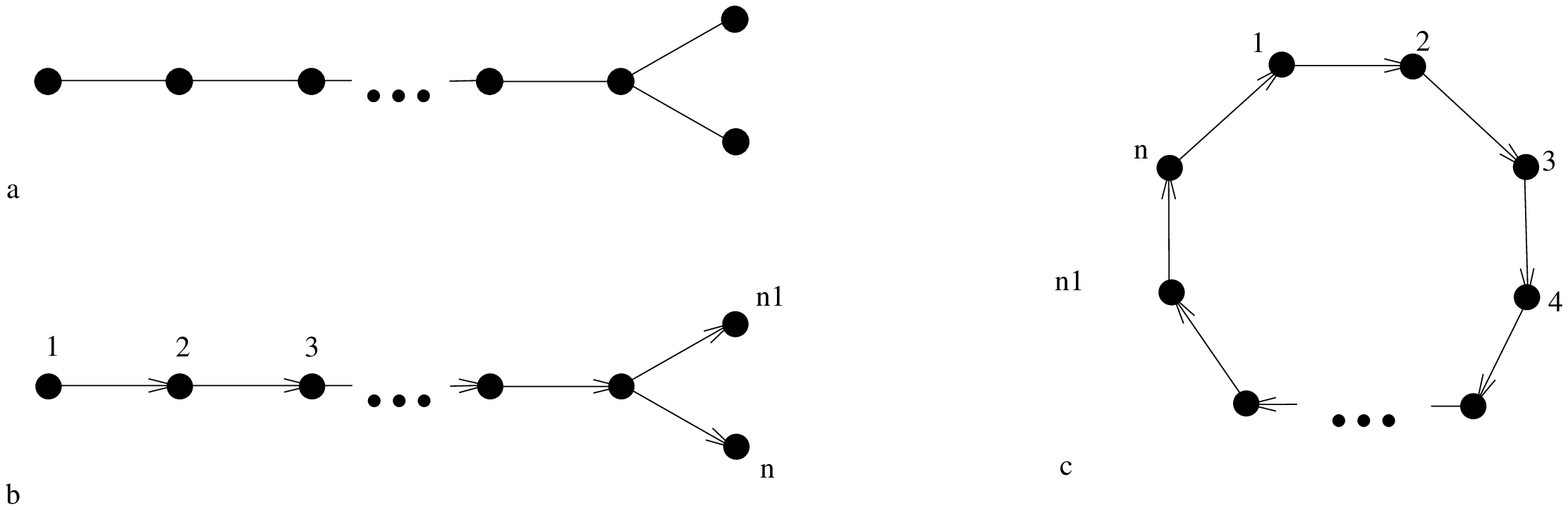,width=0.85\linewidth}
\caption{Type $D_n$: (a) Dynkin diagram; (b) acyclic quiver of type $D_n$; (c) oriented cycle.}
\label{dn}
\end{center}
\end{figure}

Note that the group $W_0(Q_1)$ (i.e. Coxeter group without cycle relations) is the affine Coxeter group $\widetilde A_{n-1}$,
thus, a group acting on the Euclidean space $\E^{n-1}$. A fundamental domain of this action is a Coxeter simplex $\Delta$ with the Coxeter diagram $\widetilde A_{n-1}$. The generators for this action are the reflections $t_1,\dots,t_n$ with respect to the facets $\Pi_1\dots,\Pi_n$ of this fundamental simplex.

Our next aim is to take a quotient by the cycle relation 
$$(t_1\ t_2t_3\dots t_{n-1}t_nt_{n-1}\dots t_3t_2)^2=e. $$
The element $(t_2t_3\dots t_{n-1}t_nt_{n-1}\dots t_3t_2) $ is a conjugate of $t_n$, so it is a reflection with  
respect to the hyperplane $\Pi$ parallel to $\Pi_1$ and passing though the vertex of $\Delta$ opposite to $\Pi_1$.
If $d$ is the distance between the hyperplanes $ \Pi$ and  $\Pi_1$, 
then the element $(t_1\ t_2t_3\dots t_{n-1}t_nt_{n-1}\dots t_3t_2)$ is a translation by $2d$ in the direction orthogonal to $\Pi_1$,
and its square  $T_1=(t_1\ t_2t_3\dots t_{n-1}t_nt_{n-1}\dots t_3t_2)^2$ is a translation by $4d$. 
So, we need to take the quotient by the action of this translation. 

The element $T_1$ has $n-1$ conjugates of type $T_k=(t_k\ t_{k+1}\dots t_{n}t_1\dots t_{k-2}t_{k-1}t_{k-2}\dots t_1t_n\dots t_{k+1})^2$, ($k=2,\dots,n$).
They all are translations by $4d$ in the direction orthogonal to $\Pi_k$.
The group generated by $T_1,\dots,T_n$ 
is a free abelian group of rank $n-1$. Hence, taking a quotient of $E^{n-1}$ by the translations $T_1,\dots,T_n$ we get an $(n-1)$-dimensional torus $\T^{n-1}$. The action of  $W_0(Q_1)$ on $\E^{n-1}$  turns into a faithful action of $D_n= W(Q_1)$ on the torus $\T^{n-1}$.



\begin{remark}
The Coxeter diagram $\widetilde A_n$ is the unique affine Dynkin diagram containing a cycle. This implies that our construction gives no further examples of actions on non-trivial quotients of Euclidean space.

\end{remark} 

\subsection{Actions on hyperbolic manifolds}
\label{act-hyp}
We now consider the case when the group  $W_0(Q_1)$ acts on a hyperbolic space, and hence, the group $W=W(Q_1)$ 
acts on its quotient $X=\H^n/W_C(Q_1)$ (recall that $W_C(Q_1)$ is the normal closure of all cycle relations). As it is shown below (Theorem~\ref{manifold}, Corollary~\ref{man}), in this case $X$ is always a manifold. In this section we list all known actions on hyperbolic manifolds obtained in this way (Table~\ref{hyp}).

\begin{nota}
 In Table~\ref{hyp} we omit orientations of arrows of quivers $Q$ and $Q_1$. Here arrows of $Q$ may be oriented in arbitrary way, and the only requirement on the orientations of arrows of $Q_1$ is that all chordless cycles are oriented.

The underlying graphs of $Q$ and $Q_1$ are the Coxeter diagrams of fundamental polytopes of the action of $W$ and $W_0(Q_1)$ on the sphere and the hyperbolic space respectively. All the volumes except the last two are computed in~\cite{vol}, the Euler characteristic (and thus the volume) of the fundamental polytope of the action of $D_8$ was computed by the program \url{CoxIter} written by Rafael Guglielmetti. The volume of the fundamental polytope of the action of $A_7$ is not known to the authors. The Euler characteristic is computed for all even-dimensional manifolds. The precise value of the volume of $X$ for four-dimensional $X$ is equal to $4\pi^2\chi(X)/3$, and for six-dimensional $X$ equals $-8\pi^3\chi(X)/15$.


\end{nota}
\setlength{\tabcolsep}{0.35cm}
\begin{table}[!h]
\vbox to 0.96\textheight{\vss
\hbox to \textwidth{\hss
\begin{turn}{90}
\begin{minipage}{\textheight}

\caption{Actions on hyperbolic manifolds.   }
\label{hyp}
\begin{tabular}{|c|c|c|c|c|c|c|c|c}
\hline
&&&&&&&\\
$W$    & $Q$ & $Q_1$ & $|W|$ & $\dim{X}$ &
\begin{tabular}{c}$\vol{X}$ \\ {\small approx.} \end{tabular} & \begin{tabular}{c}number \\ of cusps \end{tabular} & $\chi(X)$\\
&&&&&&&\\
\hline
$A_4$  & \parbox[c]{0.09\linewidth}{\epsfig{file=./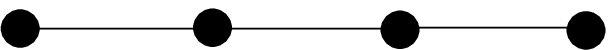,width=\linewidth}}    &  \parbox[c]{0.065\linewidth}{\epsfig{file=./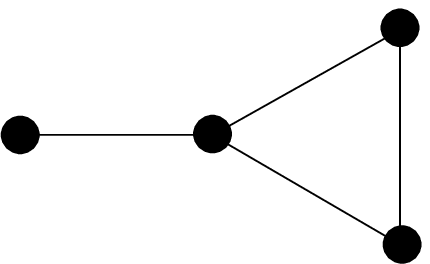,width=\linewidth}}\parbox[c]{0.091\linewidth}{\vphantom{\epsfig{file=./pic/a4_.eps,width=\linewidth}}}      & $5!$& 3    & {\small $|W|\cdot 0.084578$} &  5   &     \\ 
\hline
$D_4$  & \parbox[c]{0.065\linewidth}{\epsfig{file=./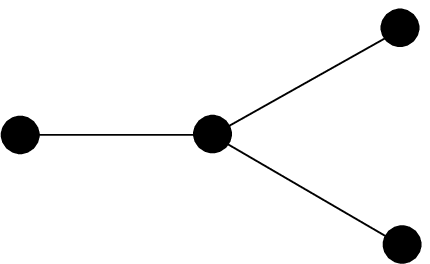,width=\linewidth}}     &  \parbox[c]{0.04\linewidth}{\epsfig{file=./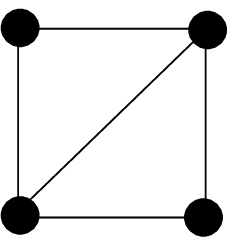,width=\linewidth}}\parbox[c]{0.06\linewidth}{\vphantom{\epsfig{file=./pic/d4_.eps,width=\linewidth}}}      & $2^3\cdot 4!$ & 3 &  {\small $|W|\cdot 0.422892$  }   & 16 & \\
\hline
$D_5$ &  \parbox[c]{0.09\linewidth}{\epsfig{file=./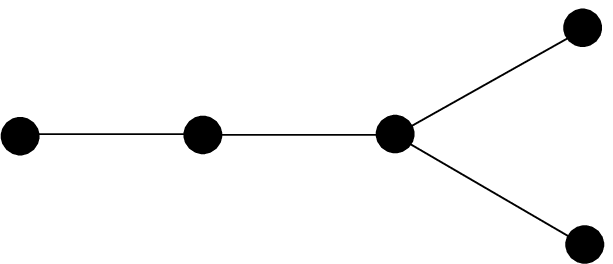,width=\linewidth}}    &  \parbox[c]{0.09\linewidth}{\epsfig{file=./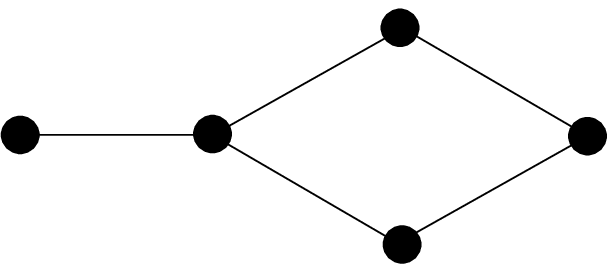,width=\linewidth}}\parbox[c]{0.13\linewidth}{\vphantom{\epsfig{file=./pic/d5_.eps,width=\linewidth}}}      & $2^4\cdot 5!$ & 4 &  
{\small $|W|\cdot 0.013707  $}  & 10 & 2 \\
\hline
$E_6$ & \parbox[c]{0.11\linewidth}{\epsfig{file=./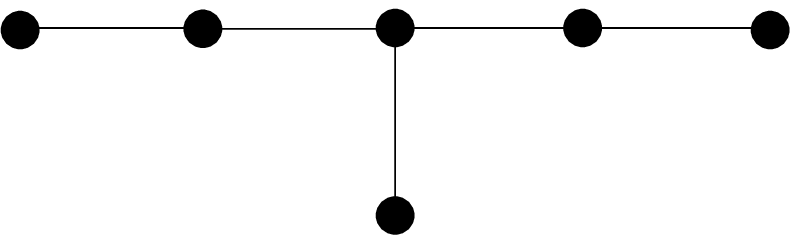,width=\linewidth}}    &   \parbox[c]{0.09\linewidth}{\epsfig{file=./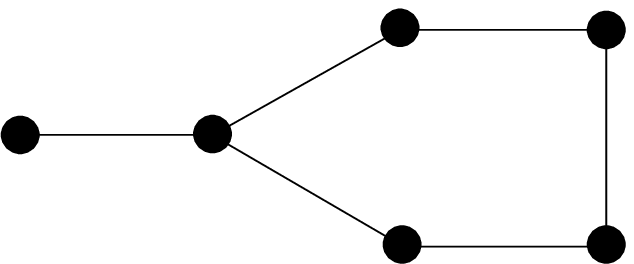,width=\linewidth}}\parbox[c]{0.13\linewidth}{\vphantom{\epsfig{file=./pic/e6_.eps,width=\linewidth}}}     & $2^7\cdot 3^4\cdot 5$  &5 &  {\small $|W|\cdot 0.002074 $} & 27 & \\
\hline 
$E_7$ &  \parbox[c]{0.13\linewidth}{\epsfig{file=./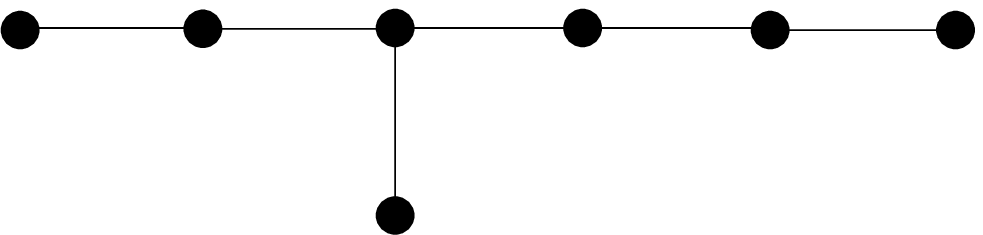,width=\linewidth}}   &   \parbox[c]{0.12\linewidth}{\epsfig{file=./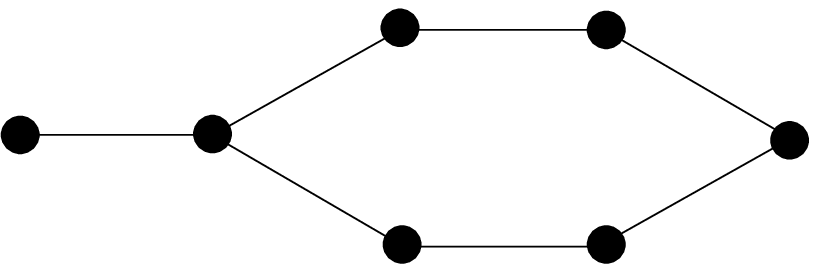,width=\linewidth}} \parbox[c]{0.17\linewidth}{\vphantom{\epsfig{file=./pic/e7_.eps,width=\linewidth}}}    & $2^{10}\cdot 3^4\cdot 5\cdot 7$  & 6 & {\small  $|W|\cdot 2.962092\times 10^{-4}$ }& 126 & -52 \\
\hline
$E_8$ &  \parbox[c]{0.16\linewidth}{\epsfig{file=./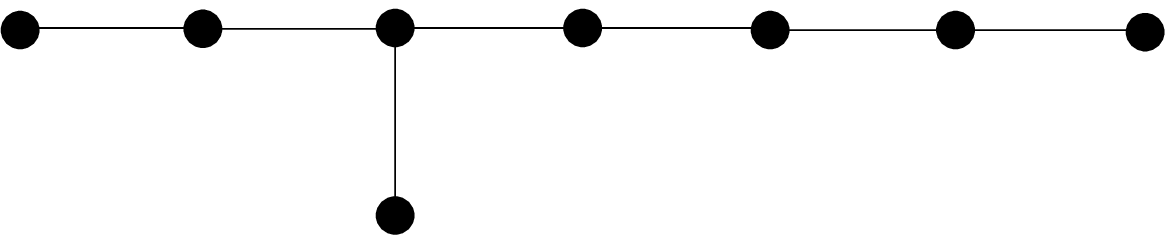,width=\linewidth}}   &   \parbox[c]{0.122\linewidth}{\epsfig{file=./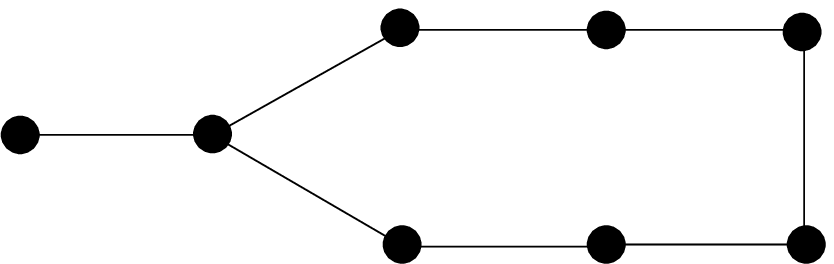,width=\linewidth}} \parbox[c]{0.17\linewidth}{\vphantom{\epsfig{file=./pic/e8_.eps,width=\linewidth}}}     & $2^{14}\cdot 3^5\cdot 5^2\cdot 7$  & 7 &{\small $|W|\cdot 4.110677 \times 10^{-5}$} & $2160$ &  \\
\hline
$A_7$&  \parbox[c]{0.16\linewidth}{\epsfig{file=./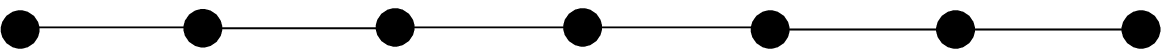,width=\linewidth}} &  \parbox[c]{0.122\linewidth}{\epsfig{file=./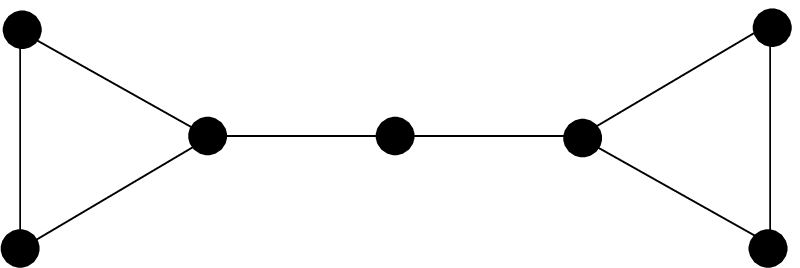,width=\linewidth}}\parbox[c]{0.17\linewidth}{\vphantom{\epsfig{file=./pic/a7_.eps,width=\linewidth}}}&$8!$&5&&70&\\
\hline
$D_8$& \parbox[c]{0.16\linewidth}{\epsfig{file=./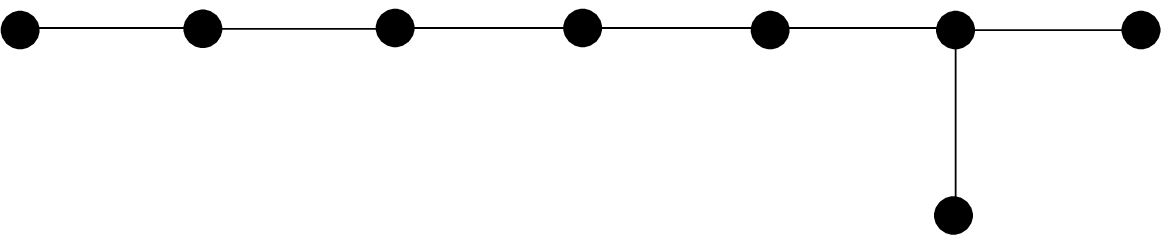,width=\linewidth}}& \parbox[c]{0.14\linewidth}{\epsfig{file=./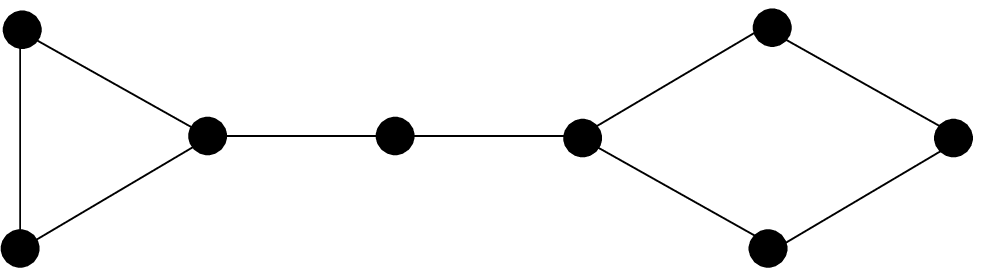,width=\linewidth}}\parbox[c]{0.2\linewidth}{\vphantom{\epsfig{file=./pic/d8_.eps,width=\linewidth}}}&$2^7\cdot 8!$&6& {\small $ |W|\cdot 0.002665$}&1120&-832\\
\hline
\end{tabular}

\end{minipage}
\end{turn}

\hss}
\vss}
\end{table}


\begin{remark}
The manifold $X$ obtained in the case $W=A_4$ is the manifold $M_5$ studied in~\cite{MPR}, it has minimal known volume amongst $5$-cusped hyperbolic $3$-manifolds. It can be obtained via a ramified $2$-fold covering over the $3$-sphere $\S^3$ branching along $1$-skeleton of a $4$-simplex. This manifold can be also obtained as the complement  of the five-chain link in the $3$-sphere.

\end{remark}



\begin{remark}
The symmetry group of $X$ always contains $W$. Further, if the underlying graph of the quiver $Q_1$ has symmetries, then the fundamental polytope of the action of $W(Q_1)$ on $X$ also has these symmetries, and thus the symmetry group of $X$ also contains a semidirect product of $W$ and the group of symmetries of the graph. In particular, one can note from Table~\ref{hyp} that the underlying graph of $Q_1$ always has an additional symmetry of order $2$, therefore we actually get the group $W\rtimes \Z_2$ acting on $X$. Moreover, in the cases $W=D_4, A_7$ or $D_8$ the symmetry  group of the fundamental polytope is $\Z_2\times \Z_2$,
which results in the action of the group $W\rtimes (\Z_2\times \Z_2)$ on the manifold $X$.

\end{remark}

\begin{remark}\label{var}
\begin{itemize}\smallskip

\item[1.] In all cases 
the fundamental domain of the action $W(Q_1)$ on $X$ is a non-compact hyperbolic polytope, i.e. the manifold $X$
has  cusps. \smallskip

\item[2.] The combinatorial type of the fundamental domain in all but the last two cases is a simplex. In the last two cases it is a pyramid over a product of two simplices.\smallskip

\item[3.] We do not know how to compute the volume of the $5$-dimensional hyperbolic pyramid which is used to construct the manifold whose symmetry group contains $A_7\rtimes (\Z_2\times\Z_2)$ (the second last line of Table~\ref{hyp}). At the same time, one can note that this pyramid consists of four copies of a pyramid with Coxeter diagram shown on Figure~\ref{IN} (the pyramid in the Table~\ref{hyp} can be obtained from one on Fig.~\ref{IN} by two consequent reflections in the facets corresponding to the leaves of the diagram). This smaller pyramid belongs to {\it Napier cycles}, a class of polytopes classified by Im Hof~\cite{ImH}. In particular, it is a simply truncated orthoscheme, and thus there is a hope to compute its volume using the technique developed by Kellerhals~\cite{Kell}.  

\begin{figure}[!h]
\epsfig{file=./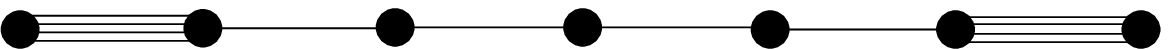,width=0.45\linewidth}
\caption{A $5$-dimensional hyperbolic Coxeter pyramid tesselating the manifold with symmetry group containing $A_7\rtimes (\Z_2\times\Z_2)$ (see Table~\ref{hyp} and Remark~\ref{var})}
\label{IN}
\end{figure}



\end{itemize}

\end{remark}

\begin{remark}
The list in Table~\ref{hyp} is complete in the following sense. Any two nodes of any quiver of finite type are connected by at most one arrow, therefore all dihedral angles of the corresponding fundamental polytope of the group action are either $\pi/2$ or $\pi/3$. All hyperbolic Coxeter polytopes satisfying such property are classified by Prokhorov in~\cite{P}. One can see that the only Coxeter diagrams whose orientations are quivers of finite type are those listed in Table~\ref{hyp}. 

On the other hand, our approach allows to construct more hyperbolic manifolds. In~\cite{P1,P2}, Parsons introduced a notion of a {\it companion basis} of a finite root system. In our terms, all companion bases (where every vector is considered up to plus/minus sign) can be described as follows. Take a quiver $Q_1$ of finite type, construct the corresponding group $W_0(Q_1)$, and construct the isomorphism between $W_0(Q_1)$ and the Weyl group $W=W(Q_1)$ by iterated changes of generators (see Section~\ref{pres}). Denote by $g_1,\dots,g_n$ the images of generating reflections of $W_0(Q_1)$ in $W(Q_1)$. Now, take the root system of $W$, and an $n$-tuple of roots corresponding to reflections $g_1,\dots,g_n$. This $n$-tuple forms a companion basis. Thus, all the manifolds constructed above originate from companion bases. Below we present an example of a hyperbolic manifold obtained from a generating set which does not correspond to a companion basis. 

As is mentioned in~\cite{BM}, not every generating set of reflections of $W$ corresponds to a companion basis. The easiest example is the quadruple of generators $(t_1,t_2,t_3,t_4)$ of $W=A_4$ given by roots $e_1-e_5,e_2-e_5,e_3-e_5,e_4-e_5$ respectively, where the root system $A_4$ consists of vectors $\{e_i-e_j\}_{1\le i,j\le 5}$ in $\R^5$ with standard basis $\{e_i\}_{1\le i\le 5}$. The reason is the following: every two of these four roots form an angle $2\pi/3$, so if there were a corresponding quiver, then its underlying graph should have been the complete graph on four vertices. However, it is easy to see that any such quiver is mutation-infinite, so it cannot be mutation-equivalent to any orientation of Dynkin diagram $A_4$.

Now we may note that the Coxeter group $$W_0=\langle u_1,u_2,u_3,u_4\mid u_i^2=(u_iu_j)^3=e\rangle$$ with Coxeter diagram shawn on Fig.~\ref{a4complete} is generated by reflections in the facets of an ideal hyperbolic $3$-dimensional simplex. Further, since $\{t_1,t_2,t_3,t_4\}$ generate $W=A_4$ and satisfy the same relations as $\{u_i\}$ above, we can write $W$ as a quotient of $W_0$ by a normal subgroup $W_C$ which is the normal closure of additional relations. It is easy to see that we can take $W_C$ as the normal closure of elements $(u_1u_2u_3u_2)^2, (u_1u_3u_4u_3)^2$ and $(u_2u_3u_4u_3)^2$. Now, one can check that $W_C$ is torsion-free, so the quotient of $\H^3$ by the action of  $W_C$ will be a finite volume hyperbolic manifold (it has $20$ cusps and symmetry group containing $A_5\rtimes A_4$, where $A_4$ is the symmetry group of the Coxeter diagram).  
    
\begin{figure}[!h]
\psfrag{1}{\small 1}
\psfrag{2}{\small 2}
\psfrag{3}{\small 3}
\psfrag{4}{\small 4}
\epsfig{file=./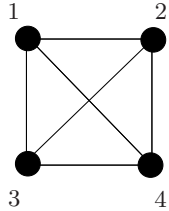,width=0.14\linewidth}
\caption{A $3$-dimensional ideal hyperbolic simplex}
\label{a4complete}
\end{figure}

\end{remark}

\section{Actions on quotients of Davis complex}
\label{Davis}

In this section we first recall the construction of Davis complex  $\Sigma\left( W,S \right)$ for each Coxeter system  $(W,S)$.
  
Then, given a Dynkin quiver $Q$ and a quiver $Q_1$ from the same mutation class, we consider the Davis complex $\Sigma(W_0(Q_1))$ for the group $W_0(Q_1)$ together with the action of the Coxeter group $W=W(Q)$ on the corresponding quotient space  $X= \Sigma(W_0(Q_1))/W_C(Q_1)$
and prove the Manifold Property for $X$ (which states that $W_C(Q_1)$ is torsion-free).
In particular, we prove that all quotients listed in Table~\ref{hyp} are hyperbolic manifolds.

\subsection{Davis complex}
For any Coxeter system $\left( W,S \right)$ there exists a contractible piecewise Euclidean cell complex $\Sigma\left( W,S\right) $ (called {\it Davis complex}) on which $W$ acts discretely, properly and cocompactly. 
The construction (based on results of Vinberg~\cite{V72}) was introduced by Davis~\cite{D1}. In~\cite{M} Moussong proved that this complex yields a natural complete piecewise Euclidean metric which is $CAT\left(0\right)$. We first give a brief description of this complex following~\cite{NV}, and then follow~\cite{sub} to define convex polytopes in $\Sigma\left( W,S\right) $.

\subsubsection{Construction of $\Sigma\left( W,S\right) $.}
For a finite group $W$ the complex $\Sigma\left(W,S\right)$ is just one cell, which is obtained
as a convex hull $K$ of the $W$-orbit of a suitable point $p$ in the standard linear 
representation of $W$ as a group generated by reflections. The point $p$ is chosen in such 
a way that its stabilizer in $W$ is trivial and all the edges of $K$ are of length 1.
The faces of $K$ are naturally identified with Davis complexes of the subgroups of $W$
conjugate to standard parabolic subgroups.

If $W$ is infinite, the complex $\Sigma\left(W,S\right)$ is built up of the Davis complexes of maximal 
finite subgroups of $W$ glued together along their faces corresponding to common 
finite subgroups. The 1-skeleton of $\Sigma\left(W,S\right)$ considered as a combinatorial graph 
is isomorphic to the Cayley graph of $W$ with respect to the generating set $S$.

The action of $W$ on $\Sigma\left(W,S\right)$ is generated by reflections.
The {\it walls} in $\Sigma\left(W,S\right)$ are the fixed points sets of reflections in $W$.
The intersection of a wall $\alpha$ with cells of $\Sigma\left(W,S\right)$ supplies $\alpha$
with a structure of a piecewise Euclidean cell complex with finitely many isometry types of cells.
Walls are totally geodesic: 
any geodesic joining two points of $\alpha$ lies entirely  in $\alpha$. 
Since $\Sigma$ is contractible and $CAT\left(0\right)$, any two points of  $\Sigma$ can be joined by a unique geodesic. 

Any wall divides $\Sigma\left(W,S\right)$ into two connected components. All the walls decompose 
$\Sigma\left(W,S\right)$ into connected components which are compact sets called {\it chambers}. Any chamber is a 
fundamental domain of the $W$-action on $\Sigma\left(W,S\right)$.  
The set of all chambers with appropriate adjacency relation is isomorphic 
to the Cayley graph of $W$ with respect to $S$.



In what follows, if $W$ and $S$ are fixed, we write  $\Sigma(W)$ or $\Sigma$ instead of $\Sigma\left(W,S\right)$.

\begin{remark}
If the group $W$ acts cocompactly by reflections on spherical, Euclidean or hyperbolic space ${\mathbb X}$,
then the Davis complex $\Sigma$ is quasi-isometric to ${\mathbb X}$.  

\end{remark}

\subsubsection{Convex polytopes in $\Sigma$}

For any wall $\alpha$ of $\Sigma$ we denote by $\alpha^+$ and $\alpha^-$
the closures of the connected components of $\Sigma\setminus \alpha$, we call these components {\it halfspaces}.

A {\it convex polytope} $P\subset \Sigma$ is an intersection of finitely many halfspaces
$P=\bigcap\limits_{i=1}^n  \alpha_i^+$, such that $P$ is not contained in any wall.
Clearly, any convex polytope $P\subset \Sigma$ is a union of closed chambers.

In what follows by writing $P=\bigcap\limits_{i=1}^n  \alpha_i^+$ we assume that
the collection of walls $\alpha_i$ is {\it minimal}: for any $j=1,\dots ,n$ we have
$P\ne \bigcap\limits_{i\ne j} \alpha_i^+$.
A {\it facet} of $P$ is an intersection $P\cap \alpha_i$ for some $i\le n$.
For any $I\subset \{1,\dots,n\}$ a set $f=\bigcap\limits_{i\in I}  \alpha_i\cap P$ is called a {\it face} of 
$P$ if it is not empty.


We can easily define a dihedral angle formed by two walls:
if $\alpha_i\cap \alpha_j\ne \emptyset$ there exists a maximal cell $C$  
of $\Sigma$ intersecting $\alpha_i\cap \alpha_j$.
We define the angle $\angle \left(\alpha_i,\alpha_j\right)$ to be equal to the corresponding Euclidean angle
formed by $\alpha_i\cap C$ and $\alpha_j\cap C$.
Clearly, any dihedral angle formed by two intersecting walls in $\Sigma$ is equal to
 $k\pi / m$ for some positive integers $k$ and $m$.  
A convex polytope $P$ is called {\it acute-angled} if each of the dihedral angles of $P$ 
does not exceed $\pi /2$. 

A convex polytope $P$ is called a {\it Coxeter polytope} if all its dihedral angles are
integer submultiples of $\pi$. In particular, a fundamental domain of any reflection subgroup of $W$
is a Coxeter polytope in $\Sigma$.  Conversely, any Coxeter polytope in $\Sigma$ is a fundamental chamber for the subgroup of $W$ generated by reflections in its walls (see e.g.~\cite[Theorem 4.9.2]{D}).


\subsection{Manifold Property}
\label{man-s}
Let us briefly recall the procedure  described in Section~\ref{various actions}:

\begin{itemize}\smallskip
\item[1.] Start with a Dynkin quiver $Q$ with corresponding Weyl group $W$. Let $Q_1$ be any quiver mutation-equivalent to $Q$.\smallskip
\item[2.] Consider the Coxeter group $W_0(Q_1)$ defined by the relations (R1) and (R2) (its Coxeter diagram coincides with
the underlying graph of the quiver $Q_1$). \smallskip
\item[3.] Let $\Sigma(W_0(Q_1))$ be the Davis complex of the group  $W_0(Q_1)$. By construction, $W_0(Q_1)$ acts on $\Sigma(W_0(Q_1))$ by reflections. \smallskip
\item[4.] Let $C_1,\dots C_p$ be the collection of all defining cycle relations for $Q_1$, define $W_C(Q_1)$ to be the normal closure of all these relations in $W_0(Q_1)$.\smallskip
\item[5.] Define $W(Q_1)= W_0(Q_1)/W_C(Q_1) $.  As is shown in~\cite{BM}, $W(Q_1)$ is isomorphic to $W$. \smallskip
\item[6.] Since $W_C(Q_1)\subset W_0(Q_1)$, $W_C(Q_1)$ acts on $\Sigma(W_0(Q_1))$.  Consider $X(Q_1)=\Sigma(W_0(Q_1))/W_C(Q_1)$. Then $W=W(Q_1)$ acts properly on $X(Q_1)$. \smallskip
\end{itemize}

So, for each quiver in the mutation class we have assigned an action of $W$ on the space $X(Q_1)$ (the latter heavily depends on the quiver $Q_1$). 


\begin{theorem}[(Manifold property)]
\label{manifold}
The group $W_C(Q_1)$ is torsion-free.

\end{theorem}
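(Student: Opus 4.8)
The plan is to use the $CAT(0)$ geometry of the Davis complex to reduce the assertion to a statement about finite parabolic subgroups of $W_0(Q_1)$, and then to settle that statement inside the root system of $W$. To this end, let $g\in W_C(Q_1)$ be of finite order. By Moussong's theorem~\cite{M} the Davis complex $\Sigma:=\Sigma(W_0(Q_1))$ carries a complete $CAT(0)$ metric, so the finite group $\langle g\rangle$ of isometries of $\Sigma$ fixes a point. Now every finite subgroup of a Coxeter group is contained in a conjugate $wW_Tw^{-1}$ of a finite standard parabolic subgroup $W_T=\langle s_i\mid i\in T\rangle$ --- a standard fact, which also follows from the fixed point just produced together with the description of cell stabilisers in $\Sigma$ (see~\cite{D}). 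Since $W_C(Q_1)$ is normal in $W_0(Q_1)$, the conjugate $w^{-1}gw$ lies in $W_C(Q_1)\cap W_T$. Hence it suffices to show that for every $T\subseteq S$ with $W_T$ finite, the restriction to $W_T$ of the quotient homomorphism $\pi\colon W_0(Q_1)\twoheadrightarrow W_0(Q_1)/W_C(Q_1)\cong W$ is injective, that is, $W_T\cap W_C(Q_1)=\{e\}$.

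To prove this I would pass to roots. Set $g_i:=\pi(s_i)\in W$. By the inductive change of generators of Section~\ref{pres}, each $g_i$ is a conjugate of a standard Coxeter generator of $W$, hence a reflection; write $g_i=s_{\beta_i}$ for a root $\beta_i$, normalised so that $|\beta_i|^2=2$. The $g_i$ generate $W$ (as $\pi$ is onto), $W$ has rank $n=|S|$, and the geometric representation of $W$ on its $n$-dimensional root space has no nonzero fixed vector; consequently $\beta_1,\dots,\beta_n$ are linearly independent, since otherwise $W=\langle g_i\rangle$ would act trivially on the nonzero orthogonal complement of their span. As $Q_1$ is a quiver, $m_{ij}\in\{2,3\}$ for $i\neq j$, so $\langle g_i,g_j\rangle$ is a dihedral group of order $4$ or $6$ accordingly, and hence $\langle\beta_i,\beta_j\rangle$ equals $0$ or $\pm1$; thus the Gram matrix $G=(\langle\beta_i,\beta_j\rangle)$ is a symmetric integer matrix with diagonal $2$ and off-diagonal entries in $\{0,\pm1\}$, nonzero exactly on the edges of $Q_1$.

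Now fix $T$ with $W_T$ finite. Since $m_{ij}\in\{2,3\}$, the underlying graph of the full subquiver $Q_1|_T$ is a disjoint union of Dynkin diagrams of type $A$, $D$ or $E$, in particular a forest. The principal submatrix $G_T=(\langle\beta_i,\beta_j\rangle)_{i,j\in T}$ is positive definite, being the Gram matrix of the linearly independent vectors $\{\beta_i\mid i\in T\}$; and because $Q_1|_T$ is a forest, one may choose signs $\varepsilon_i\in\{\pm1\}$ so that $\langle\varepsilon_i\beta_i,\varepsilon_j\beta_j\rangle\le0$ for all $i,j\in T$. Then $\{\varepsilon_i\beta_i\mid i\in T\}$ is a simple system of a finite root subsystem of type $Q_1|_T$, whose Weyl group is $\langle s_{\varepsilon_i\beta_i}\mid i\in T\rangle=\langle g_i\mid i\in T\rangle$ and is therefore isomorphic to $W_T$. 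Thus $\pi$ maps $W_T$ onto a subgroup of $W$ of order $|W_T|$, so $\pi|_{W_T}$ is an isomorphism and $W_T\cap W_C(Q_1)=\{e\}$; together with the first paragraph this proves the theorem.

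I expect the main obstacle to be precisely this last step --- showing that a spherical parabolic of $W_0(Q_1)$ embeds into $W$ instead of collapsing. It rests on two facts that must be checked with care: the linear independence of the roots $\beta_i$, i.e. that $(\beta_i)$ is a \emph{companion basis} of the root system of $W$ in the sense of Parsons~\cite{P1,P2} and Barot--Marsh, which here we read off from Theorem~A of~\cite{BM}; and the elementary observation that a quasi-Cartan companion supported on a forest is carried to a genuine, hence positive definite, Cartan matrix by a diagonal change of signs. The $CAT(0)$ fixed point reduction and the dihedral-angle bookkeeping are then routine.
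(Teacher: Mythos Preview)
Your proof is correct and follows the same overall strategy as the paper: reduce torsion-freeness of $W_C(Q_1)$ to the statement that every finite standard parabolic $W_T\subset W_0(Q_1)$ injects into $W$ under $\pi$, then verify this injectivity. The reductions differ only cosmetically --- the paper phrases it via stars of faces in the Davis complex (Theorem~\ref{manifold-eq}), you via the $CAT(0)$ fixed-point theorem and conjugacy into parabolics --- but these are equivalent. The genuine difference is in the core step. The paper first proves Lemma~\ref{eq-dihedral} (the orders of $\t t_i\t t_j$ and $t_it_j$ agree) and then invokes~\cite{F}, a classification of spherical simplices with angles $\pi p_{ij}/m_{ij}$ generating the same group as the Coxeter simplex with angles $\pi/m_{ij}$. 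You instead work inside the root system of $W$: the images $g_i=s_{\beta_i}$ give a linearly independent family of roots (your fixed-vector argument), hence the Gram matrix $G_T$ is positive definite, and because a finite simply-laced parabolic has a forest diagram you can flip signs to obtain a genuine Cartan matrix, realising $\langle g_i:i\in T\rangle$ as the Weyl group of type $Q_1|_T\cong W_T$. Your route is more self-contained --- it avoids the external reference~\cite{F} and makes the companion-basis picture explicit --- while the paper's route via spherical simplices is phrased so as to adapt more directly to the non-simply-laced setting of Section~\ref{sec diagr}, where the forest sign-flip is less immediate (cf.\ Remark~\ref{lemma-n}). Note also that your linear-independence step already contains the content of Lemma~\ref{eq-dihedral}: once the $\beta_i$ are independent, $g_i\neq g_j$, and since $m_{ij}\in\{2,3\}$ is prime the order of $g_ig_j$ must be exactly $m_{ij}$.
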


\begin{cor}
\label{man}
If  $\Sigma(W_0(Q_1))$ is a manifold then $X(Q_1)$ is also a manifold. 

\end{cor}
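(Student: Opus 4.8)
The plan is to show that every torsion element of $W_0(Q_1)$ lies outside $W_C(Q_1)$, i.e. that no nontrivial element of $W_C(Q_1)$ has finite order. Since $W_C(Q_1)$ is normal in $W_0(Q_1)$ with quotient the finite group $W$, by a standard fact about Coxeter groups acting on the Davis complex $\Sigma(W_0(Q_1))$, it suffices to understand the finite subgroups of $W_0(Q_1)$: every finite subgroup of a Coxeter group fixes a point of $\Sigma$ (the center of mass of an orbit; this is the $CAT(0)$ fixed-point argument of Bruhat--Tits, valid since $\Sigma$ is $CAT(0)$ by Moussong), and the stabilizer of a point of $\Sigma$ is conjugate into a standard parabolic subgroup $W_T$ with $T\subset S$ finite, hence to a finite parabolic. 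So a torsion element $g\in W_C(Q_1)$ is conjugate in $W_0(Q_1)$ to an element of some finite parabolic $W_T$. Replacing $g$ by this conjugate (legitimate, as $W_C(Q_1)$ is normal), we reduce to: \emph{no nontrivial element of a finite standard parabolic $W_T$ of $W_0(Q_1)$ lies in $W_C(Q_1)$}.

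**Key step: images of parabolics in $W$.** Now I would pass to the quotient $W = W_0(Q_1)/W_C(Q_1)$ and analyze the composite $W_T \hookrightarrow W_0(Q_1) \twoheadrightarrow W$. The claim becomes: this map is injective for every finite $T\subseteq S$. The underlying graph of $W_0(Q_1)$ is the underlying graph of the quiver $Q_1$; the subdiagram on $T$ is a Coxeter diagram all of whose edges are unlabeled (since $Q_1$, being mutation-finite of finite type, has all weights $1$), so $W_T$ is a simply-laced finite Coxeter group, a product of groups of types $A$, $D$, $E$. The generators $s_i$, $i\in T$, map to the reflections $g_i$ in $W$ described in Section~\ref{pres} (the companion-basis reflections), and the content of the Barot--Marsh relations (R1)--(R3) is precisely that these $g_i$ satisfy, in $W$, exactly the Coxeter relations of the diagram together with the cycle relations. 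I would argue that the cycle relations on the subset $T$ are automatically satisfied \emph{because $W_T$ is finite of finite type}: a finite-type acyclic-or-not diagram that carries a finite Coxeter group has no ``extra'' oriented chordless cycles forcing a quotient — more precisely, one shows $|W_T|$ (computed from the Coxeter diagram on $T$) equals the order of the subgroup $\langle g_i : i\in T\rangle$ of $W$, so the surjection $W_T \twoheadrightarrow \langle g_i\rangle$ is an isomorphism. The order of $\langle g_i\rangle$ can be controlled because the $g_i$ are reflections in $W$ in roots forming (a subset of) a companion basis, and the reflection subgroup they generate is a reflection subgroup of the finite group $W$ whose type is read off the diagram on $T$.

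**Alternative, cleaner route via Barot--Marsh directly.** Actually the most economical argument is to invoke Theorem~A of~\cite{BM} (recalled in Section~\ref{group}) applied to the \emph{subquiver}: since $Q_1$ is of finite type, the full subquiver $Q_1|_T$ on any subset $T\subseteq S$ inducing a finite parabolic is again of finite type (a subdiagram of a finite-type mutation class representative is finite-type), and its mutation class contains an orientation of a Dynkin diagram; hence the group presented by (R1)--(R3) on $T$ is the corresponding finite Weyl group $W_T$, of order exactly $|W_T|$. But that group is also the image of $W_T$ in $W_0(Q_1)/W_C(Q_1)$ modulo the cycle relations involving only vertices of $T$ — one must check that the \emph{full} set of cycle relations of $Q_1$, restricted to the subgroup $W_T$, generates no more than the cycle relations internal to $T$. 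This is the one genuinely delicate point: a chordless oriented cycle of $Q_1$ that meets $T$ but is not contained in $T$ could, a priori, contribute a relation lying in $W_T$. I would handle this by noting that a cycle relation $C$ for a cycle $\mathcal C$ involves the generators on $\mathcal C$, and that $W_C(Q_1)\cap W_T$, being generated (as a normal-closure intersection) by conjugates of the $C_i$ that land in $W_T$, is controlled by the standard fact (\cite[Ch.~4]{D}) that for a parabolic $W_T$, the intersection of $W_T$ with the normal closure of a set $R$ equals the normal closure in $W_T$ of $(W_0 R W_0^{-1})\cap W_T$; one then checks that such conjugates reduce to products of the internal cycle relations.

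**Main obstacle.** The crux — and where I expect to spend the real effort — is exactly this compatibility of cycle relations with passage to parabolic subgroups: showing that restricting the normal closure $W_C(Q_1)$ to a finite parabolic $W_T$ produces precisely the Barot--Marsh relations for the subquiver $Q_1|_T$, no more. Once that is established, $W_T \to W$ is injective because its image is the Barot--Marsh group of a finite-type quiver, which by~\cite[Theorem A]{BM} has order $|W_T|$; hence $W_T\cap W_C(Q_1)=\{e\}$, every torsion element of $W_0(Q_1)$ avoids $W_C(Q_1)$, and $W_C(Q_1)$ is torsion-free. The Corollary is then immediate: a group acting freely on a manifold $\Sigma(W_0(Q_1))$ yields a manifold quotient, and $W_C(Q_1)$ acts freely precisely because it is torsion-free and acts properly by cellular isometries on the $CAT(0)$ complex $\Sigma$, where any nontrivial finite-order-free element acts without fixed points.
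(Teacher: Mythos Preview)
Your reduction is the same as the paper's: both observe that the Corollary follows immediately from Theorem~\ref{manifold} (torsion-freeness of $W_C(Q_1)$), and both reduce torsion-freeness to showing that for every finite standard parabolic $W_T\subset W_0(Q_1)$ the composite $W_T\hookrightarrow W_0(Q_1)\twoheadrightarrow W$ is injective. From there, however, the approaches diverge.

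The paper's argument is geometric and short. It first proves the rank-two statement (Lemma~\ref{eq-dihedral}): the order of $t_it_j$ in $W$ equals the order of $\tilde t_i\tilde t_j$ in $W_0(Q_1)$, simply because a dihedral group of order $4$ or $6$ has no proper quotient generated by two distinct reflections. Then, since the $t_i$ are genuine reflections of $W$ acting on $\mathbb S^{n-1}$, the images $\{t_i:i\in T\}$ bound a spherical $(|T|-1)$-simplex whose dihedral angles are $\pi p_{ij}/m_{ij}$ with $\gcd(p_{ij},m_{ij})=1$; as $m_{ij}\in\{2,3\}$ this forces each angle to be $\pi/m_{ij}$ or $2\pi/3$, and the paper invokes \cite{F} to conclude that reflections in the facets of such a simplex generate a group isomorphic to the Coxeter group with exponents $m_{ij}$, i.e.\ to $W_T$. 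No analysis of how cycle relations interact with parabolics is needed.

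Your proposed route has a genuine gap at exactly the point you flag. The ``standard fact'' you invoke --- that $W_C(Q_1)\cap W_T$ equals the normal closure in $W_T$ of those conjugates of the cycle relators that happen to lie in $W_T$ --- is not standard and is not in \cite[Ch.~4]{D}; intersections of normal closures with parabolics do not decompose this way in general. Likewise, appealing to Barot--Marsh on the subquiver $Q_1|_T$ only tells you that $W(Q_1|_T)\cong W_T$ (trivially, since $T$ carries an ADE forest and hence no chordless cycles); it says nothing about whether \emph{external} cycle relations of $Q_1$ impose further relations on the image of $W_T$ inside $W$. So the sentence ``its image is the Barot--Marsh group of a finite-type quiver, which \dots\ has order $|W_T|$'' assumes the conclusion. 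Your first paragraph's remark that the $g_i$ generate a reflection subgroup of $W$ ``whose type is read off the diagram on $T$'' is much closer to what actually works --- but making that precise is exactly the content of Lemma~\ref{eq-dihedral} together with \cite{F}, which is the paper's proof.
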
 

Let $F$ be a fundamental domain for the action of $W(Q_1)$ on $\Sigma(W_0(Q_1))$.  
A {\it star } $\Star(f)$  of a face $f$ of $F$ is the union of all fundamental domains $F_i$  of $\Sigma$ containing $f$:
$$
\Star(f)=\bigcup\limits_{f\in F_i} F_i.
$$
We will prove the following statement which is equivalent to the Manifold Property (cf.~\cite[Corollary~2]{EM}).

\begin{theorem}
\label{manifold-eq}
Let $F$ be a fundamental domain for the action of $W(Q_1)$ on $\Sigma(W_0(Q_1))$. 
Then for any face $f$ of $F$ the interior $ \left(\Star(f)\right)^0$ of the star of $f$ in $X(Q_1)$ is isometric to the interior of the star of $f$ in $\Sigma(W_0(Q_1))$.

\end{theorem}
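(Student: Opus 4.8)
The plan is to reduce Theorem~\ref{manifold-eq} to a single group-theoretic statement and then establish that statement geometrically. Write $\Sigma=\Sigma(W_0(Q_1))$ and let $p\colon\Sigma\to X(Q_1)$ be the quotient map. A face $f$ of the chamber $F$ corresponds to a subset $I_f\subseteq\{1,\dots,n\}$ for which $W_{I_f}:=\langle s_i:i\in I_f\rangle$ is finite; moreover $\mathrm{Stab}_{W_0(Q_1)}(f)=W_{I_f}$ and, by definition, $\Star(f)=\bigcup_{w\in W_{I_f}}wF$ inside $\Sigma$, while its image $p(\Star(f))$ is the star of $f$ in $X(Q_1)$. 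Since the metric on $X(Q_1)$ is the quotient length metric, it suffices to prove that $p$ is a local isometry on $(\Star(f))^0$ and injective there; that the image then equals $(\Star(f))^0$ computed in $X(Q_1)$ is routine (cf.~\cite{EM}).

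For injectivity, suppose $p(x)=p(x')$ with $x,x'\in(\Star(f))^0$, so $x'=cx$ for some $c\in W_C(Q_1)$. Because $x$ lies in the interior of the star, every chamber of $\Sigma$ containing $x$ has the form $wF$ with $w\in W_{I_f}$; these chambers form a coset of $\mathrm{Stab}_{W_0(Q_1)}(x)$, and the same holds at $x'=cx$ after translation by $c$. Comparing the two coset descriptions forces $c\in W_{I_f}$. Hence everything reduces to
\[
W_I\cap W_C(Q_1)=\{e\}\qquad\text{for every }I\subseteq\{1,\dots,n\}\text{ with }W_I\text{ finite,}
\]
i.e.\ to the injectivity of the composite $W_I\hookrightarrow W_0(Q_1)\twoheadrightarrow W$. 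This also yields the Manifold Property outright, since every torsion element of $W_C(Q_1)$ lies in a conjugate of some finite $W_I$ and $W_C(Q_1)$ is normal; and once it is known, $W_C(Q_1)$ acts freely on $\Sigma$, so $p$ is indeed a local isometry.

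To prove the displayed statement, note that $W_I$ is a finite simply-laced Coxeter group, so its Coxeter diagram---the underlying graph of the full subquiver $Q_1[I]$ of $Q_1$ on the vertex set $I$---is a disjoint union of Dynkin diagrams of types $A$, $D$, $E$, and in particular a forest. Let $g_i\in W$ be the image of $s_i$ and $\Phi$ the root system of $W$; by~\cite{BM} each $g_i$ is a reflection, say $g_i=s_{\beta_i}$ with $\beta_i\in\Phi$. As the $g_i$ generate $W$ and $W$ acts on its reflection representation without nonzero fixed vectors, $\{\beta_1,\dots,\beta_n\}$ spans that representation and hence is a basis; so $\{\beta_i:i\in I\}$ is linearly independent and $\beta_i\ne\pm\beta_j$ for $i\ne j$. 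Thus the order $m_{ij}\in\{2,3\}$ of $g_ig_j$ equals the order of the rotation $s_{\beta_i}s_{\beta_j}$ on $\mathrm{span}(\beta_i,\beta_j)$, which forces the two equal-length roots to meet at angle $\pi/2$ when $m_{ij}=2$ and at $\pi/3$ or $2\pi/3$ when $m_{ij}=3$. Since $Q_1[I]$ is a forest, one may flip the signs of the $\beta_i$, $i\in I$, so that every pairwise angle is right or obtuse; then the Gram matrix of $\{\beta_i:i\in I\}$ is exactly the Cartan matrix of $Q_1[I]$. Hence the linear map taking the simple roots of the reflection representation of $W_I$ to $\{\beta_i:i\in I\}$ is an isometry intertwining the simple reflections of $W_I$ with the $g_i$; that representation being faithful, $|\langle g_i:i\in I\rangle|=|W_I|$, and since the $g_i$ satisfy the defining relations of $W_I$ the surjection $W_I\twoheadrightarrow\langle g_i:i\in I\rangle$ is an isomorphism, so $W_I\to W$ is injective.

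Assembling the pieces proves Theorem~\ref{manifold-eq}, hence the Manifold Property, and Corollary~\ref{man} follows at once since $X(Q_1)$ is then everywhere locally isometric to $\Sigma$. The crux---everything else being bookkeeping with cosets of parabolic subgroups---is the final step: the relations among the $g_i$ by themselves give only a surjection $W_I\twoheadrightarrow\langle g_i:i\in I\rangle$, and upgrading it to an isomorphism requires an honest \emph{geometric} realization of $W_I$; what makes this available is exactly that finiteness of $W_I$ forces $Q_1[I]$ to be acyclic, so the companion-basis roots can be re-signed into a genuine simple system. An oriented cycle in $Q_1[I]$ would obstruct the re-signing, consistently with the cycle relations being precisely what is killed in passing to $W$.
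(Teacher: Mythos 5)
Your argument is correct, and its overall skeleton matches the paper's: both reduce Theorem~\ref{manifold-eq} to the statement that, for every face $f$ of $F$ with spherical index set $I$, the finite standard parabolic subgroup of $W_0(Q_1)$ generated by $\{\t t_i\}_{i\in I}$ maps isomorphically onto the subgroup of $W$ generated by the images $\{t_i\}_{i\in I}$ (your coset bookkeeping just makes explicit the sufficiency that the paper asserts). Where you genuinely diverge is in how that key statement is proved. The paper first establishes Lemma~\ref{eq-dihedral} (the orders of $\t t_i\t t_j$ and $t_it_j$ agree), ruling out $t_i=t_j$ by the fact that a rank-$n$ Coxeter group cannot be generated by fewer than $n$ reflections~\cite{sub}; it then realizes both groups as generated by reflections in the facets of spherical $(k-1)$-simplices, the image simplex possibly non-Coxeter with angles $2\pi/3$, and invokes the classification of spherical simplices generating discrete reflection groups~\cite{F} to conclude the two groups are isomorphic. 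You instead work with the companion-basis roots $\beta_i$ of the images: linear independence of $\{\beta_i\}$ (coming from the fact that the $g_i$ generate $W$, which is essential on its reflection representation) replaces the rank lemma, and the citation of~\cite{F} is replaced by the observation that finiteness of $W_I$ forces $Q_1[I]$ to be a forest, so the roots can be re-signed into an honest simple system with Cartan matrix that of $Q_1[I]$, after which faithfulness of the geometric representation of $W_I$ gives injectivity. What your route buys is a self-contained, elementary argument that makes transparent exactly why acyclicity of spherical subdiagrams is the decisive point (and why oriented cycles are what gets killed); what the paper's route buys is brevity and robustness under generalization -- the Lemma~\ref{eq-dihedral}-plus-\cite{F} scheme carries over essentially unchanged to the non-simply-laced and affine settings of Sections~\ref{sec diagr} and~\ref{inf}, where unequal root lengths and labels $4$ and $6$ would require extra care in your re-signing/Gram-matrix step.
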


Equivalently, if a face $f$ of $F$ is contained in $q$ fundamental domains of the action of $W_0(Q_1)$ on $\Sigma(W_0(Q_1))$, 
then $f$ is also contained in $q$ fundamental domains of the action of $W(Q_1)$ on $X(Q_1)$.  


To prove the Manifold Property we will use the following observation.

\begin{lemma}
\label{eq-dihedral}
Let $\t t_i$ and $\t t_j$ be two generating reflections of $W_0(Q_1)$, and let $t_i$ and $t_j$ be their images in $W=W(Q_1)$ under the canonical projection $W_0(Q_1)\to W(Q_1)=W_0(Q_1)/W_C(Q_1)$. Then the orders of the elements $\t t_i\t t_j$ and $t_it_j$ coincide.
\end{lemma}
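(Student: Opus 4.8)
The plan is to show the two orders agree by proving two inequalities. One direction is immediate: since $t_i t_j$ is the image of $\t t_i \t t_j$ under the surjection $W_0(Q_1)\to W(Q_1)$, the order of $t_i t_j$ divides the order of $\t t_i \t t_j$, so $\mathrm{ord}(t_i t_j)\le \mathrm{ord}(\t t_i \t t_j)$ (with the convention that a finite order is smaller than $\infty$). Note also that $\mathrm{ord}(\t t_i\t t_j)=m_{ij}\in\{2,3\}$ by relation (R2), unless $i=j$, in which case both orders are $1$; so the content is entirely in the reverse inequality, i.e. ruling out that two generating reflections which have order $3$ in $W_0(Q_1)$ collapse to order $2$ (or $1$) in $W$, and similarly that order-$2$ ones do not collapse to order $1$.

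For the reverse inequality I would use the explicit realization of $W=W(Q_1)$ as the Weyl group together with the inductive description of the generators from Section~\ref{pres}. Recall that starting from the standard Coxeter generators of $W=W(Q)$ at the Dynkin quiver $Q$, the generators $t_i$ attached to any $Q_1$ in the mutation class are obtained by iterated conjugations $t_i\mapsto s_k t_i s_k$, hence each $t_i$ is a reflection in $W$, i.e. it corresponds to a root $\beta_i$ of the root system of $W$ (the $n$-tuple $(\beta_1,\dots,\beta_n)$ being a ``companion basis'' in the terminology recalled after Table~\ref{hyp}). Then $\mathrm{ord}(t_it_j)$ is determined by the angle between $\beta_i$ and $\beta_j$: it is $2$ if $\beta_i\perp\beta_j$ and $3$ if they span a subsystem of type $A_2$ (these are the only possibilities in a simply-laced system, up to sign the angle is $\pi/2$ or $\pi/3$). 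So it suffices to show: $\beta_i$ and $\beta_j$ are not proportional (this gives $\mathrm{ord}(t_it_j)\ge 2$, so $t_i\ne t_j$ and $\mathrm{ord}\ge 2$), and whenever $i,j$ are joined by an arrow in $Q_1$ the roots $\beta_i,\beta_j$ are not orthogonal (this gives $\mathrm{ord}(t_it_j)=3$). Equivalently, one tracks the Cartan-type pairing $\langle \beta_i,\beta_j^\vee\rangle$ under mutation: if before a mutation $\mu_k$ two roots had pairing $0$ or $\pm 1$ according to whether they were disconnected or connected in $Q_1$, the conjugation formula $\beta_i\mapsto s_k\beta_i=\beta_i-\langle\beta_i,\beta_k^\vee\rangle\beta_k$ together with the quiver-mutation rule for edge weights shows the new pairings are again $0$ or $\pm1$ in the same pattern dictated by $\mu_k(Q_1)$. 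Since all weights of arrows in a quiver of finite type are $1$, this is a finite, bounded bookkeeping computation, and the base case $Q_1=Q$ is the standard Cartan matrix of a Dynkin diagram, for which the claim is clear.

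The main obstacle is making the induction self-contained: one must verify that the combinatorial data read off from $Q_1$ (which pairs of vertices are joined, and that weights stay equal to $1$) really does control the geometric angles between the corresponding roots $\beta_i$ after each conjugation, i.e. that the change-of-generators procedure of Section~\ref{pres} is ``compatible'' with quiver mutation at the level of the associated vectors. This is exactly the content of Remark~\ref{rem choice} (the rule $u_i=s_k(v_i)$ for the normals), extended from a single mutation to arbitrary sequences; alternatively one can cite that the $\beta_i$ form a companion basis and invoke the known fact (Parsons~\cite{P1,P2}) that companion bases of a finite root system consist of roots pairwise at angle $\pi/2$ or $\pi/3$ in precisely the adjacency pattern of the quiver. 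Once that compatibility is in hand, Lemma~\ref{eq-dihedral} follows: $\mathrm{ord}(t_it_j)$ equals $m_{ij}=\mathrm{ord}(\t t_i\t t_j)$ for every $i\ne j$, and trivially for $i=j$.
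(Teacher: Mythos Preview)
Your proposal is correct but takes a substantially different route from the paper. The paper's argument is a one-liner: the dihedral group $\langle \t t_i,\t t_j\rangle$ has order $2m_{ij}$ with $m_{ij}\in\{2,3\}$, so the order of the image $t_it_j$ divides the \emph{prime} $m_{ij}$; if it were strictly smaller it would be $1$, i.e.\ $t_i=t_j$. But then $W$ is generated by at most $n-1$ reflections, contradicting the fact (cited from~\cite[Lemma~2.1]{sub}) that a Coxeter group of rank $n$ cannot be generated by fewer than $n$ reflections. No root computations, no induction on mutations.

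Your approach via tracking the companion-basis roots $\beta_i$ through mutations also works and is more constructive --- it actually computes the angles rather than arguing by contradiction --- but it is considerably more labor, and the induction you sketch is essentially re-deriving (or citing) Parsons' result. Two remarks: first, your worry that order~$3$ might ``collapse to order~$2$'' is moot, since $2\nmid 3$; once you notice this, the entire lemma reduces to $t_i\ne t_j$ for $i\ne j$, which is exactly what the rank argument settles in one stroke. Second, your inductive method is not wasted: in the non-simply-laced setting (Remark~\ref{lemma-n}) one has $m_{ij}\in\{2,3,4,6\}$, the primality shortcut fails, and the paper itself falls back on an induction over mutations of the kind you propose.
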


\begin{proof}
The dihedral group generated by $t_i$ and $t_j$ is a quotient of the dihedral group generated by $\t t_i$ and $\t t_j$. The order of the latter group equals $6$ or $4$, therefore, if we assume that the two groups are distinct, the order of the former equals two, i.e. $t_i=t_j$, which is impossible since a Coxeter group $W$ of rank $n$ cannot be generated by less than $n$ reflections (see e.g.~\cite[Lemma~2.1]{sub}).

\end{proof}

\begin{proof}[of Theorem~\ref{manifold-eq}]
Let $f$ be a face of $F$. Denote by $I\subset\{1,\dots,n\}$ the index set such that $f$ is the intersection of the walls fixed by $\{\t t_i\}_{i\in I}$. To prove Theorem~\ref{manifold-eq} (and thus the Manifold Property) it is sufficient to prove that the group $\wt T_I\subset W_0(Q_1)$ generated by $\{\t t_i\}_{i\in I}$ has the same order as the group  $T_I\subset W(Q_1)=W$ generated by $\{t_i\}_{i\in I}$. 

Denote $k=|I|$. The group $\wt T_I$ can be considered as a group generated by reflections in the facets of a spherical Coxeter $(k-1)$-simplex $\wt P$ with dihedral angles $\pi/m_{ij}$, where $m_{ij}$ is the order of $\t t_i\t t_j$. Furthermore, since every $t_i\in W=W(Q)$ is a reflection on the $X(Q)=\S^{n-1}$, the group $ T_I$ can also be considered as a group generated by reflections in the facets of a spherical $(k-1)$-simplex $P$. The simplex $P$ may not be Coxeter, however, by Lemma~\ref{eq-dihedral}, its dihedral angles are equal to $\pi p_{ij}/m_{ij}$ for some $p_{ij}$ being coprime with $m_{ij}$. Since $m_{ij}=2$ or $3$, this implies that either $p_{ij}=1$, or $p_{ij}/m_{ij}=2/3$. According to~\cite{F}, this means that reflections in the facets of $\wt P$ and $P$ generate isomorphic groups.    

\end{proof}

\subsection{Mutation of complexes}
\label{manifold mutation}
In this section we describe how to obtain $X(Q_2)$ from $X(Q_1)$, where $Q_2=\mu_k (Q_1)$. The construction generalizes the examples shown in Sections~\ref{torus} and~\ref{back-s}. 

\begin{itemize}\smallskip
\item[Step~1.] Let $s_1,\dots,s_n$ be the generators of $W$ corresponding to $Q_1$ (i.e., if $\{s_i\}$ with relations (R1) and (R2) are generating reflections of $W_0(Q_1)$ composing a Coxeter system, and the $\{s_i\}$ with relations (R1)--(R3) are their images under canonical homomorphism $W_0(Q_1)\to W_0(Q_1)/W_C(Q_1)$), let $t_1,\dots,t_n$ be the generators corresponding to $Q_2$. As it is shown in~\cite{BM}, $t_i$ can be obtained from $s_i$ as follows 
$$
t_{i}=
\begin{cases}
s_k s_i s_k & \text{if there is an arrow from $i$ to $k$ in $Q_1$;} \\
s_i& \text{otherwise.} \\
\end{cases}
$$

Denote by $\pi_j$, $j=1,2$, the canonical projection $\Sigma(W_0(Q_j))\to X(Q(j))$. According to the construction, a fundamental chamber $F_j\subset X(Q_j)$ of the action of $W$ on $X(Q_j)$ is an image of a fundamental chamber  $\wt F_j\subset \Sigma(W_0(Q_j))$ under the projection. 
The elements $s_i$ act on $\Sigma(W_0(Q_1))$ by reflections in the facets of $\wt F_1$, let walls $\t \alpha_1,\dots,\t \alpha_n\subset \Sigma(W_0(Q_1))$ be the mirrors of the reflections $s_1,\dots,s_n$.
Similarly, the elements $t_i$ are reflections in the facets of $\wt F_2$, let walls $\t \beta_1,\dots,\t \beta_n\subset \Sigma(W_0(Q_2))$ be the mirrors of the reflections $t_1,\dots,t_n$.\smallskip

\item[Step~2.] Consider the action of the element $t_i$ on $\Sigma(W_0(Q_1))$.
It is a reflection with respect to a wall $\t\gamma_i$ satisfying
$$
\t\gamma_i=
\begin{cases}
s_k(\t\alpha_i) & \text{ if there is an arrow from $i$ to  $k$ in $Q_1$};  \\
\t\alpha_i & \text{ otherwise.}\\
\end{cases}
$$


\item[Step~3.] The fundamental chamber $\wt F_1$ is an intersection of $n$ halfspaces bounded by walls $\{\t\alpha_i\}$. Denote these halfspaces by $\t\alpha_i^+$, i.e. $\wt F_1=\bigcap\t\alpha_i^+$. Note that the dihedral angles of a polytope $\wt F_1$ equal $\pi/m_{ij}$, where $m_{ij}$ is the order of $s_is_j$ in $W_0(Q_1)$. In particular, as all $m_{ij}=2$ or $3$, every two facets of $\wt F_1$ have a non-empty intersection.\smallskip

\item[Step~4.] Consider the domain $H\subset \Sigma(W_0(Q_1))$ defined as follows:

 $H$ is the intersection of $n$ halfspaces $H=\bigcap\limits_{i=1}^n \tilde\gamma_i^+$, where
 the walls $\tilde\gamma_i$ are defined in Step~$2$ and the half-spaces $\tilde\gamma_i^+$, $i\ne k$, are chosen by
$$
\tilde\gamma_i^+=
\begin{cases}
\tilde\alpha_k^{-} & \text{if $i=k$,} \\
s_k(\t\alpha_i^+), & \text{if there is an arrow from $i$ to $k$ in $Q$,} \\
\t\alpha_i^+ & \text{otherwise}
\end{cases}
$$
(cf. Remark~\ref{rem choice}).

Note that $H$ is well-defined, i.e. $H$ indeed has exactly $n$ facets. This follows from the fact that the facet $\t\alpha_k\cap\wt F_1$ of $\wt F_1$ (which is also a facet of $H$) has exactly $n-1$ facets $\t\alpha_k\cap\t\alpha_i\cap\wt F_1$, and by construction of $H$ all these faces of $\wt F_1$ are also faces of $H$.   \smallskip 


\item[Step~5.]
The domain $H$ is a convex (possibly, non-compact) polytope in $\Sigma(W_0(Q_1))$ with $n$ facets. 
The reflections with respect to the facets of $H$ are precisely $t_i$, they generate the group $W_0(Q_1)$,
however, in general, $H$ is not a fundamental domain of the action of $W_0(Q_1)$ on $\Sigma(W_0(Q_1))$.\smallskip
 
\item[Step~6.]
Now, we will use $H$ to construct a new cell complex $Y$ (such that the group $W$ acts on $Y$ with fundamental chamber $H$).

Consider $|W|$ copies of the polytope $H$, labeled by the elements of $W$ 
(denote by $H_w$ the polytope corresponding to $w\in W$).
 
Glue the polytope  $H_w$ to the polytope  $H_{wt_i}$ along the facets $\tilde \gamma_i$ 
of $H_w$ and  $H_{wt_i}$ so that the reflection in the common facet swaps the polytopes 
(and takes the facet  $\tilde \gamma_j$ of  $H_w$ to the facet of  $H_{wt_i}$ having the same number).   
Denote by $Y$ the space obtained by the gluing. Note that by ``reflection'' here we mean a map from one polytope to another, this may not extend to a reflection on the whole $Y$ (cf. Remark~\ref{non-reflection}).\smallskip

\item[Step~7.] Denote by $m_{ij}$ the order of the subgroup $\langle t_i,t_j \rangle\subset W$ (note that the Coxeter group $$\langle t_1,\dots t_n\,|\,(t_it_j)^{m_{ij}}=e\rangle$$ is the group $W_0(Q_2)$). Consider the following completion $Y'$ of $Y$.\smallskip
\begin{itemize}
\item First, we complete the fundamental domain $H$ (obtaining a completed fundamental chamber $H'$) in the following way.
For every pair $\{i,j\}\subset\{1,\dots n\}$ we add an $(n-2)$-dimensional face $\tilde\gamma_{i}\cap\tilde\gamma_{j}$ if it is not a face of  $H$ yet; we define the dihedral angle composed by facets $\tilde \gamma_i\cap H$ and $\tilde \gamma_j\cap H$ of $H'$ as $\pi/m_{ij}$. Then, for each finite subgroup $\langle t_i\,|\, i\in I\rangle\subset W_0(Q_2)$ (where $I\subset \{1,\dots,n \} $ is an index set) we add an $(n-|I|)$-dimensional face $\bigcap_{i\in I} \tilde\gamma_{i}$ if it is not a face of  $H$ yet. The procedure is well-defined since any subgroup of a finite group is finite. The resulting polytope $H'$ can be identified with a fundamental chamber of $\Sigma(W_0(Q_2))$, i.e. with $\wt F_2$. \smallskip

\item Now, we do this for each $H_w$, $w\in W$ (a new face $\bigcap_{i\in I} \t\gamma_{i}$  of $H_w$ is identified with the similar face of $H_{wt_j}$ if $j\in I$). The procedure results in a new complex $Y'$.\smallskip
\end{itemize}

\item[Step~8.] As $H'$ can be identified with $\wt F_2$, and $Y'$ is obtained from copies of $H'$ by gluings along the action of $W$ (see Step~6), the complex $Y'$ can be identified with $X(Q_2)=\Sigma(W_0(Q_2))/W_C(Q_2)$. \smallskip


\end{itemize}

\section{General case}
\label{sec diagr}
In all the considerations above we started with a Dynkin quiver $Q$. 
In this section 
we consider mutations of {\it diagrams} instead of quivers, which allows us to deal with non-simply-laced Dynkin diagrams. 



\subsection{Mutations of diagrams}
Recall that quivers are in one-to-one correspondence with skew-symmetric integer matrices. An integer $n\times n$  matrix $B$ is {\it skew-symmetrizable} if there exists a diagonal $n\times n$ matrix
$D=\{d_1,\dots,d_n\}$ with positive integer entries such that the product $BD$ is skew-symmetric, i.e., $b_{ij}d_j=-b_{ji}d_i$. 

Given a skew-symmetrizable matrix $B$ one obtains a {\it diagram}  $\G=\G(B)$ in the following way:
$\G$ is an oriented labeled graph with vertex set $1,\dots, n$, where vertex $i$ is connected to vertex $j$ 
by an arrow labeled by $ {|b_{ij}b_{ji}|}$ if $b_{ij}\ne 0$.

One diagram may correspond to several distinct matrices. Any diagram $\G$ constructed by a skew-symmetrizable matrix satisfies the following property: for any chordless cycle in $\G$ the product of labels along this cycle is a perfect square (cf.~\cite[Exercise~2.1]{Kac}). 

Since skew-symmetric matrices are also skew-symmetrizable, we may understand quivers as a partial case of diagrams: to make a diagram out of a quiver one needs to square all the labels. 

A {\it mutation} $\mu_k$ of a diagram $\G$ is defined similarly to the mutation of quivers~\cite{FZ2}:
  
\begin{itemize}
\item
orientations of all arrows incident to a vertex $k$ are reversed; 
\item
for every pair of vertices $(i,j)$ such that $\G$ contains arrows directed from $i$ to $k$ and from $k$ to $j$ the weight of the arrow joining $i$ and $j$ changes as described in Figure~\ref{diagrammut}.
\end{itemize}

\begin{figure}[!h]
\begin{center}
\psfrag{a}{\small $a$}
\psfrag{b}{\small $b$}
\psfrag{c}{\small $c$}
\psfrag{d}{\small $d$}
\psfrag{k}{\small $k$}
\psfrag{mu}{\small $\mu_k$}
\epsfig{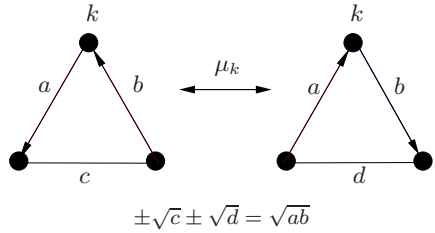}\\
\medskip
$\pm\sqrt{c}\pm\sqrt{d}=\sqrt{ab}$
\caption{Mutations of diagrams.
The sign before $\sqrt{c}$ (resp., $\sqrt{d}$) is positive if the three vertices form an oriented cycle, and negative otherwise. Either $c$ or $d$ may vanish. If $ab$ is equal to zero then neither value of $c$ nor orientation of the corresponding arrow does change.}
\label{diagrammut}

\end{center}
\end{figure}

As for quivers, a diagram is {\it of finite type} if it is mutation-equivalent to an orientation of a Dynkin diagram.

\subsection{Generalization of the construction to non-simply-laced diagrams of finite type}

The construction is generalized straightforwardly, but the definition of the group defined by a diagram is a bit longer than the one in the simply-laced case.

Let $\G$ be an orientation of a Dynkin diagram with $n$ nodes, let $W$ be the corresponding finite Weyl group, and let $\G_1$ be any diagram  mutation-equivalent to $\G$. Denote by $W(\G_1)$ the group generated by $n$ generators $s_i$ with the following relations:

\begin{itemize}
\item[(R1)] $s_i^2=e$ for all $i=1,\dots,n$;

\item[(R2)] $(s_is_j)^{m_{ij}}=e$ for all $i,j$, not joined by an edge labeled by $4$, 
where
$$
m_{ij}=
\begin{cases}
2 & \text{if $i$ and $j$ are not connected;} \\
3 & \text{if $i$ and $j$ are connected by an edge.} \\
4 & \text{if $i$ and $j$ are connected by an edge labeled by $2$;} \\
6 & \text{if $i$ and $j$ are connected by an edge labeled by $3$.} 
\end{cases}
$$

\item[(R3)] (cycle relation) for every chordless oriented cycle $\C$ given by 
$$i_0\stackrel{w_{i_0i_1}}\to i_1\stackrel{w_{i_1i_2}}\to\cdots\stackrel{w_{i_{d-2}i_{d-1}}}\to i_{d-1}\stackrel{w_{i_{d-1}i_{0}}}\to i_0$$
and for every $l=0,\dots,d-1$ we compute $t(l)=\left(\prod\limits_{j=l}^{l+d-2}\!\!\!\sqrt{w_{i_j i_{j+1}}}\ -\sqrt{w_{i_{l+d-1} i_l}}\right)^2$, where the indices are considered modulo $d$;
now for every $l$ such that $t(l)<4$, 
we take the relation

$$
(s_{i_l}\ s_{i_{l+1}}\dots s_{i_{l+d-2}}s_{i_{l+d-1}}s_{i_{l+d-2}}\dots s_{i_{l+1}})^{m(l)}=e,
$$
where 
$$
m(l)=
\begin{cases}
2 & \text{if $t(l)=0$;} \\
3 & \text{if $t(l)=1$;} \\
4 & \text{if $t(l)=2$;} \\
6 & \text{if $t(l)=3$} 
\end{cases}
$$
(this form of cycle relations was introduced by Seven in~\cite{S}).

\end{itemize}

According to~\cite{BM}, all the cycle relations for a given chordless cycle follow from one with $m(l)=2$ (such a relation always exists). Thus, as for quivers, we may take exactly one defining relation per cycle. 

It is shown in~\cite{BM} that the group $W(\G_1)$  does not depend on the choice of a diagram in the mutation class of $\G$. In particular, it is isomorphic to the initial Coxeter group $W$. 

The further part of the construction generalizes to the diagram settings verbatim. We obtain three manifolds, see Table~\ref{hyp-n}

\setlength{\tabcolsep}{0.28cm}
\begin{center}
\begin{table}[!h]
\caption{Actions on hyperbolic manifolds, non-simply-laced case.   }
\label{hyp-n}
\begin{tabular}{|c|c|c|c|c|c|c|c|}
\hline
&&&&&&&\\
$W$    & $\G$ & $\G_1$ & $|W|$ & $\dim(X)$ &
\begin{tabular}{c}$\vol{X}$ \\ {\small approx.} \end{tabular} & \begin{tabular}{c}number \\ of cusps \end{tabular} & \begin{tabular}{c}$\chi(X)$\\ ($\dim{X}$ even) \end{tabular}\\
&&&&&&&\\
\hline
{$B_3$}  &  \psfrag{2}{\scriptsize 2 }\parbox[c]{0.084\linewidth}{\epsfig{file=./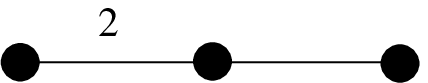,width=\linewidth}}      &   \psfrag{2}{\scriptsize 2 }\parbox[c]{0.044\linewidth}{\epsfig{file=./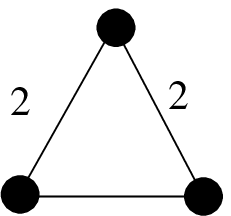,width=\linewidth}}
& $2^3\cdot 3!$& 2 & $8\pi$ & compact& -4\\
\hline
$B_4$  &  \psfrag{2}{\scriptsize 2 }\epsfig{file=./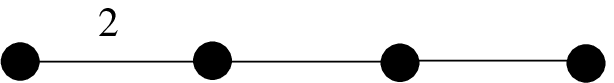,width=0.12\linewidth}      &  \psfrag{2}{\scriptsize 2 }\parbox[c]{0.08\linewidth}{\epsfig{file=./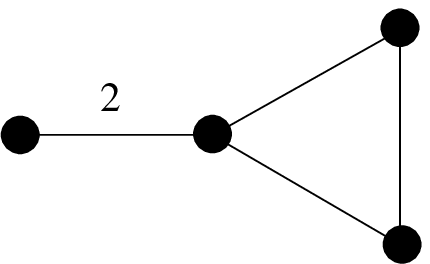,width=\linewidth}}
      & $2^4\cdot 4!$& 3 &  {\small $|W|\cdot 0.211446$  }   & 16 & \\
\hline
$F_4$  &   \psfrag{2}{\scriptsize 2 }\epsfig{file=./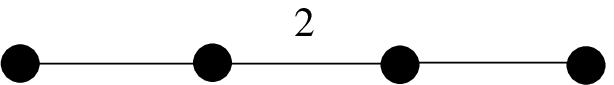,width=0.12\linewidth}     &  \psfrag{2}{\scriptsize 2 }\parbox[c]{0.06\linewidth}{\epsfig{file=./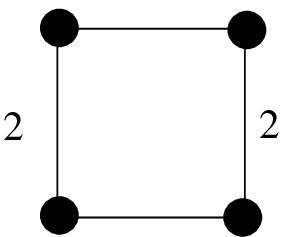,width=\linewidth}}
& $2^7\cdot 3 ^2$& 3 &  {\small $|W|\cdot 0.222228 $} & compact  &  \\
\hline
\end{tabular}
\end{table}
\end{center}

\begin{remark}
The manifold $X$ constructed for the group $B_4$ coincides with the manifold constructed in Section~\ref{act-hyp} for the group $D_4$ (see Table~\ref{hyp}). 

\end{remark}

\begin{remark}
In contrast to the simply-laced case, we do not know whether the list of hyperbolic manifolds (Table~\ref{hyp-n}) that can be obtain by our construction is complete: there is no known classification of finite volume hyperbolic Coxeter polytopes with angles $\pi/2, \pi/3, \pi/4$, so potentially the construction may provide other examples of finite volume hyperbolic manifolds.

\end{remark}

\begin{remark}
\label{lemma-n}
The proof of the Manifold Property works for diagrams, but we need to update the proof of Lemma~\ref{eq-dihedral}. 

Let us proceed by induction on the number of mutations required to obtain $\G_1$ from $\G$. Assume that the lemma holds for a diagram $\G_2$ mutation-equivalent to $\G$, and we want to deduce the lemma for $\G_1=\mu_k(\G_2)$. In the notation of Lemma~\ref{eq-dihedral} (after substituting $Q$ by $\G$) this means the following: if we denote by $\t u_i,\t u_j,\t u_k\in W_0(\G_2)$ the reflections corresponding to nodes $i,j,k$ of $\G_2$ and by $u_i,u_j,u_k\in W(\G_2)=W$ their projections to $W$, we want to deduce the equality of orders of elements $\t t_i\t t_j\in W_0(\G_1)$ and $t_it_j\in W(G_1)=W$ from the equality of the orders of $\t u_i\t u_j\in W_0(\G_2)$ and $u_iu_j\in W(\G_2)=W$.

If one of $i,j$ (say, $i$) equals $k$ the statement is obvious: in this case the label of the arrow between $i$ and $j$ is the same in $\G_1$ and $\G_2$ (and thus the orders of $\t t_i\t t_j\in W_0(\G_1)$ and $\t u_i\t u_j\in W_0(\G_2)$ are the same), and $t_it_j=(u_iu_j)^{\pm 1}$ (since either $t_j=u_j$ or $t_j=u_iu_ju_i$), so the orders of $t_it_j$ and $u_iu_j$ are also the same.

If the other case we can restrict our consideration to the subdiagrams of $\G_1$ and $\G_2$ spanned by three vertices $i,j,k$. A subdiagram of a diagram of finite type also has finite type~\cite{FZ2}, so if it is not simply-laced then $\G_1$ (and $\G_2$) is mutation-equivalent to one of the three diagrams $B_3$, $B_2+A_1$ and $G_2+A_1$. Now, a short straightforward check verifies the lemma.   

\end{remark}

\subsection{Example: action of $B_3$ on the $2$-sphere and a hyperbolic surface of genus $3$.}

Consider the diagram $\G=B_3$ (see Fig.~\ref{b3}(a)). The 
corresponding Dynkin diagram of type $B_3$  determines a finite Coxeter group $W$ acting on the $2$-dimensional sphere by reflections:
$$W=\langle s_1,s_2,s_3 \,|\, s_i^2=(s_1s_2)^3=(s_2s_3)^4=(s_1s_3)^2=e \rangle $$ 
The fundamental domain of this action is a spherical triangle with angles $(\frac{\pi}{3},\frac{\pi}{4},\frac{\pi}{2})$, see Fig.~\ref{b3}(b) for the action and a fundamental domain in stereographic projection.

\begin{figure}[!h]
\begin{center}
\psfrag{a}{\small (a)}
\psfrag{b}{\small (b)}
\psfrag{c}{\small (c)}
\psfrag{2_}{\small $2$}
\psfrag{mu}{\scriptsize $\mu_2$}
\psfrag{1}{\scriptsize $1$}
\psfrag{2}{\scriptsize $2$}
\psfrag{3}{\scriptsize $3$}
\epsfig{file=./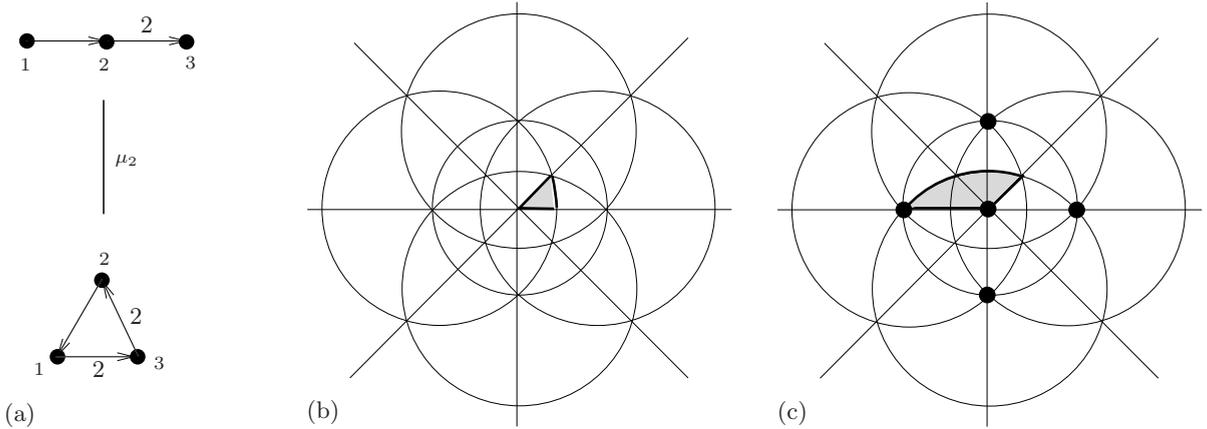,width=0.99\linewidth}
\label{b3}
\caption{(a) A diagram of type $B_3$ and its mutation $\mu_2$; (b) action of $B_3$ on the sphere, in $s$-generators (stereographic projection); (c) action of $B_3$ on the sphere in $t$-generators: bold points are the branching points for the covering (one more branching point at infinity)
}
\end{center}
\end{figure}

Now, apply the mutation $\mu_2$ to the diagram $\G$ obtaining the diagram $\G_1$ (see Fig.~\ref{b3}(a)). The corresponding group 
 $$W_0(\G_1)=\langle t_1,t_2,t_3  \ | \ t_i^2=(t_1t_2)^3=(t_2t_3)^4=(t_1t3)^4=e\rangle$$  
acts on the hyperbolic plane by reflections, and the fundamental domain of this action is a triangle $F$  with angles $(\frac{\pi}{3},\frac{\pi}{4},\frac{\pi}{4})$.
We need to take a quotient of this action by the normal closure of the element
$(t_1\ t_2t_3t_2)^2\in W_0(G_1)$.
As we expect from Manifold Property, we will get some finite volume hyperbolic surface $S$ tiled by 48 images of the 
triangle $F$. It is easy to see that $S$ is a genus $3$ surface.

Similarly to Section~\ref{sph to tor} describing $A_3$ case, one can see that $S$ can be obtained as a ramified degree $4$ covering of the sphere branching at six points composing the orbit of the ``central point'' (see Fig.~\ref{b3}(c)).

\section{Infinite groups}
\label{inf}

\subsection{Group constructed by a diagram}
The result of~\cite{FT} allows to generalize the construction to infinite groups.
More precisely, it is shown in~\cite{FT} that to a diagram $\G$ of finite mutation type that either is exceptional (see~\cite{FeSTu1,FeSTu2}) or arises from a triangulation of a unpunctured surface or orbifold (see~\cite{FST,FeSTu3}) one can assign a Coxeter group $W_0(\G)$ (using exactly the same rule as above). 

Then one takes a quotient group imposing relations of the following types:
\begin{itemize}
\item[(R3)] cycle relations;
\item[(R4)] additional affine relations;
\item[(R5)] additional handle relations;
\item[(R6)] additional $X_5$-relations. 
\end{itemize}
All these relations are of the type $(r_i r_j)^m=e$, where $r_i$ and $r_j$ are some conjugates of the generating reflections, 
see~\cite{FT} for details. 

As before, we define $W_C(\G)$ as the normal closure of all relations of types (R3)-(R6). 
It is shown in~\cite{FT}  that the group $W(\G)=W_0(\G)/W_C(\G)$ does not depend on the choice of $\G$ in a given mutation class. 
 
This implies that we can repeat verbatim the construction of actions on factors of the Davis complex $\Sigma(W_0(\G))$, so that we get an action of an (infinite) group $W=W_0(\G)/W_C(\G)$ on the quotient space $X(\G)=\Sigma(W_0(\G))/W_C(\G)$. Furthermore, as before, a mutation of $\G$ induces a mutation of $X(\G)$.  

\subsection{Diagrams of affine type}
\label{aff}
A diagram $\G$ is called a diagram of {\it affine type} if $\G$ is mutation-equivalent to an orientation of an affine Dynkin diagram different from an oriented cycle. In particular, if $\G$ is of affine type, the group $W=W(\G)$ is an affine Coxeter group.

Mutation classes of diagrams of affine type may be rather complicated (see~\cite{H} for the detailed description of mutation classes of non-exceptional diagrams of finite and affine type). In particular, a diagram $\G$ in the mutation class may contain one arrow labeled by $4$ (this corresponds to two reflections in a Coxeter system of $W_0(\G)$ generating an infinite dihedral group, or, equivalently, to two non-intersecting facets of a fundamental chamber of $\Sigma(W_0(\G)$).    

Nevertheless, applying considerations from Section~\ref{man} and Remark~\ref{lemma-n}, we obtain the Manifold Property for diagrams of affine type.  

\begin{theorem}
If $\G$ is a diagram of affine type then the group $W_C(\G)$ is torsion-free.

\end{theorem}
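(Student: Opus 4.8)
The plan is to run the proof of Theorem~\ref{manifold} essentially verbatim, replacing Lemma~\ref{eq-dihedral} by the variant of Remark~\ref{lemma-n} and extending that variant to cover the rank-$3$ subdiagrams occurring in affine mutation classes. The first step is the reduction already performed in Section~\ref{man-s}. Since $\Sigma(W_0(\G))$ is a contractible $CAT(0)$ complex, every finite subgroup of $W_C(\G)$ has a fixed point, and the stabiliser in $W_0(\G)$ of a point of $\Sigma(W_0(\G))$ is a finite subgroup conjugate to a standard parabolic subgroup $\wt T_I=\langle\,\t t_i\mid i\in I\,\rangle$, where $I\subseteq\{1,\dots,n\}$ ranges over the subsets with $\wt T_I$ finite. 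As $W_C(\G)$ is normal in $W_0(\G)$, the intersection $W_C(\G)\cap g\,\wt T_I\,g^{-1}$ is conjugate to $W_C(\G)\cap\wt T_I=\ker\bigl(\wt T_I\to W(\G)\bigr)$, so $W_C(\G)$ is torsion-free if and only if the canonical projection $W_0(\G)\to W(\G)$ restricts to an injection on every finite standard parabolic $\wt T_I$; equivalently, $|\wt T_I|=|T_I|$, where $T_I\subseteq W(\G)$ denotes the image of $\wt T_I$. This is the analogue of Theorem~\ref{manifold-eq}, and it is all that has to be proved.

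Fix such an $I$ and write $m_{ij}$ for the order of $\t t_i\t t_j$, so $\wt T_I$ is the finite Coxeter group with Coxeter matrix $(m_{ij})_{i,j\in I}$, hence $m_{ij}\in\{2,3,4,6\}$; there is a canonical surjection $\wt T_I\twoheadrightarrow T_I$, so it suffices to bound $|T_I|$ from below by $|\wt T_I|$. Now $W=W(\G)$ is an affine Weyl group acting by isometries on $\E^{n-1}$, and every $t_i$, being conjugate to a standard generator, acts as a Euclidean reflection. As $T_I$ is a quotient of the finite group $\wt T_I$, it is finite, hence fixes a point $x_0\in\E^{n-1}$, so all the mirrors of the $t_i$, $i\in I$, pass through $x_0$: around $x_0$ the group $T_I$ is generated by reflections in the facets of a spherical simplex $P$, exactly as in the proof of Theorem~\ref{manifold-eq}. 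Once we know that the dihedral angle of $P$ along $\mathrm{mirror}(t_i)\cap\mathrm{mirror}(t_j)$ equals $\pi p_{ij}/m_{ij}$ with $\gcd(p_{ij},m_{ij})=1$, the result of~\cite{F} (applied to the values $m_{ij}\in\{2,3,4,6\}$) shows that reflections in the facets of $P$ and of the Coxeter simplex of $\wt T_I$ generate isomorphic groups, so $|T_I|=|\wt T_I|$.

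What remains is the affine version of Lemma~\ref{eq-dihedral}: for all $i,j$ the orders of $\t t_i\t t_j\in W_0(\G)$ and of $t_it_j\in W$ coincide; the case $i,j\in I$ then supplies the dihedral angles needed above. Following Remark~\ref{lemma-n}, I would induct on the number of mutations needed to pass from an orientation $\G$ of an affine Dynkin diagram (necessarily a tree, since oriented cycles are excluded) to $\G_1$. The base case is immediate: for a tree orientation there are no cycle relations, $W_0(\G)=W(\G)=W$, and the projection is the identity. For the inductive step with $\G_1=\mu_k(\G_2)$: if one of $i,j$ equals $k$, the label between $i$ and $j$ is unchanged and $t_it_j=(u_iu_j)^{\pm1}$, so nothing changes; otherwise one restricts to the rank-$3$ subdiagram on $\{i,j,k\}$. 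A subdiagram of a diagram of finite mutation type is again of finite mutation type, and in an affine mutation class at most one arrow carries the label $4$ (see Section~\ref{aff}), so each such rank-$3$ subdiagram is of finite or affine type; running through the short list (the new entries compared with Remark~\ref{lemma-n} being $\wt A_2$, $\wt B_2=\wt C_2$, $\wt G_2$ and their reducible relatives, together with those carrying a single label-$4$ arrow) and using the explicit description of the groups $W_0$ and $W$ attached to these small diagrams by~\cite{FT}, one checks the equality of orders directly.

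I expect the rank-$3$ bookkeeping of the last paragraph to be the only genuine obstacle. One must be sure that the list of rank-$3$ subdiagrams that can arise is complete; that the new phenomenon of a label-$4$ arrow — a pair $\t t_i,\t t_j$ generating an infinite dihedral group — is harmless, which it is, since such a pair never lies inside a finite parabolic $\wt T_I$, so the corresponding face of the fundamental chamber is empty and no identification is required; and that the classification in~\cite{F} indeed covers $m_{ij}=6$, which is needed in the $\wt G_2$ mutation class (otherwise an elementary direct verification for a spherical triangle with an angle $5\pi/6$ has to be added). Granting these routine checks, the argument is word for word the proof of Theorem~\ref{manifold}.
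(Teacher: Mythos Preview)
Your proposal is correct and follows exactly the route the paper indicates: the paper's ``proof'' of Theorem~8.1 is the single sentence ``applying considerations from Section~\ref{man-s} and Remark~\ref{lemma-n}, we obtain the Manifold Property for diagrams of affine type'', and you have faithfully unpacked this --- reduction to $|\wt T_I|=|T_I|$ via torsion and finite parabolics, geometric realisation of $T_I$ using the action of the affine Weyl group on $\E^{n-1}$ (a finite group of Euclidean isometries fixes a point), the appeal to~\cite{F}, and the inductive extension of the dihedral-angle lemma from Remark~\ref{lemma-n} to rank-$3$ subdiagrams occurring in affine mutation classes. The paper's remark immediately following the theorem (``Our proof of Manifold Property uses an induction on the number of mutations one needs to perform to obtain a diagram without oriented cycles'') confirms that this is precisely the intended argument.

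One small slip worth flagging: an orientation of an affine Dynkin diagram is not ``necessarily a tree''. The diagram $\wt A_{n-1}$ is an undirected cycle, and the exclusion in the paper's definition of affine type only rules out the \emph{oriented} cycle (which is of finite type $D_n$); the remaining acyclic orientations of $\wt A_{n-1}$ are the base case for that mutation class and are not trees. This does not affect your argument --- what matters for the base case is that the diagram has no oriented chordless cycles, hence no relations (R3)--(R6), so $W_0(\G)=W(\G)$ --- but the parenthetical justification should read ``acyclic'' rather than ``tree''.
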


\begin{remark}
We are currently unable to drop the assumption that $\G$ is of affine type. 
Our proof of Manifold Property uses an induction on the number of mutations one needs to perform to obtain a diagram without oriented cycles (such diagrams are called {\it acyclic}, and diagrams from their mutation classes are {\it mutation-acyclic}). If $\G$ is not of finite or affine type, then either $\G$ is not mutation-acyclic, or it is not mutation-finite (see~\cite{FST,FeSTu1,FeSTu2}).


\end{remark}

\subsection*{Acknowledgments}
We are grateful to V.~Emery, M.~Feigin, A.~King, R.~Marsh, R.~Kellerhals and M.~Shapiro for helpful discussions. 
We also thank R.~Guglielmetti for computing the volume of one of the hyperbolic pyramids. 
The work was initiated during the program on cluster algebras at MSRI in the Fall of 2012. We would like to thank the organizers of the program for invitation, and the Institute for hospitality and excellent research atmosphere.

\affiliationone{
Anna Felikson and Pavel Tumarkin\\
Department of Mathematical Sciences, Durham University, Science Laboratories, South Road, Durham, DH1 3LE\\ 
UK
\email{anna.felikson@durham.ac.uk\\ 
pavel.tumarkin@durham.ac.uk}}
\end{document}